\def\N{\mathbb{N}}
\def\R{\mathbb{R}}
\def\interior#1{\smash{\mathop{#1}\limits^{\lower1pt\hbox{$\scriptscriptstyle\circ$}}}}
\newtheorem{theo}{Theorem}[section]
\newtheorem{prop}{Proposition}[section]
\newtheorem{lemma}{Lemma}[section]
\newenvironment{rem}{\noindent {\it Remark} : }{$\Diamond$ }
\begin{document}
%entete
\begin{titlepage}
\title{{\bf  Filtering the Wright-Fisher diffusion.}} 
\author{MIREILLE CHALEYAT-MAUREL$^{1}$, \and VALENTINE GENON-CATALOT$^{2}$}
\date{}

%\begin{document}
%\begin{titlepage}

\maketitle

\thispagestyle{empty}

{\sc \it \noindent
$^{1}$(Corresponding author) Laboratoire MAP5, Universit\'e Paris Descartes, U.F.R. de Math\'ematiques
et Informatique, CNRS UMR 8145 and Laboratoire de Probabilit\'es et Mod\`eles Al\'eatoires (CNRS-UMR 7599), \\
45, rue des Saints-P\`eres, 75270 Paris Cedex 06, France.\\ e-mail:
mcm@math-info.univ-paris5.fr.\\
$^{2}$Laboratoire MAP5, Universit\'e Paris Descartes, U.F.R. de Math\'ematiques
et Informatique, CNRS-UMR 8145,\\ 45, rue des Saints-P\`eres,
75270 Paris Cedex 06, France. \\e-mail: genon@math-info.univ-paris5.fr}

\begin{abstract}
We consider a Wright-Fisher diffusion $(x(t))$ whose current state cannot be observed directly. Instead, at times $t_1<t_2<\ldots$, the observations $y(t_i)$ are such that, given the process $(x(t))$, the random variables $(y(t_i))$ are independent and the conditional distribution of $y(t_i)$ only depends on $x(t_i)$. When this conditional distribution has a specific form, we prove that the model $((x(t_i),y(t_i)), i\ge 1)$ is a computable filter in the sense that all distributions involved in filtering, prediction and smoothing are exactly computable. These distributions are expressed as finite mixtures of parametric distributions. Thus, the number of statistics to compute at each iteration is finite, but this number may vary along iterations.

\end{abstract}

\vspace{1cm}
\noindent
{\bf MSC}: Primary 93E11, 60G35; secondary 62C10.  

\vspace{1cm}
\noindent
{\bf Keywords:} Stochastic filtering, partial observations, diffusion processes, discrete time observations,
hidden Markov models, prior and posterior distributions.

\vspace{0,5cm}
\noindent
{\bf Running title:} Wright-Fisher diffusion.
\end{titlepage}

\section{Introduction}
Consider a large population  composed of two types of individuals 
A and a. Suppose that the proportion $x(t)$ of A-type at
time $t$ evolves continuously according to the following stochastic
differential equation
\begin{equation} \label{wf}
dx(t)= [-\delta x(t)+ \delta' (1-x(t))] dt +2 [x(t)(1-x(t))]^{1/2} dW_t, \quad
x(0)= \eta,
\end{equation}
where $(W_t)$ is a standard one-dimensional Brownian motion and $\eta$
is a random variable with values in $(0,1)$ independent of
$(W_t)$. This process is known as the Wright-Fisher gene frequency
diffusion model with mutation effects. It has values in the interval $(0,1)$. It appears as the diffusion approximation of the discrete time
and space Wright-Fisher Markov chain and is used to model the
frequency of an allele A in a population of genes composed of two distinct
alleles A and a (see {\em e.g.} Karlin and Taylor (1981, p. 176-179
and 221-222) or Wai-Yuan (2002, Chap. 6)). Suppose now that the current state $x(t)$ cannot be
directly observed. Instead, at times $t_1, t_2, \ldots,t_n$ with $0
\le t_1<t_2 \ldots <t_n$, we have
observations $ y(t_i)$ such that, given the whole process $(x(t))$, the
random variables $y(t_i)$ are independent and the conditional
distribution of $y(t_i)$ only depends on the corresponding state
variable $x(t_i)$. More precisely, we consider the following discrete
conditional distributions. Either, a binomial distribution, {\em i.e.},
for  $N \ge 1$ an integer,
\begin{equation} \label{bin}
P(y(t_i)= y|x(t_i)=x)= \binom{N}{y} x^{y} (1-x)^{N-y}, \quad y=0,1,\ldots,N,
\end{equation}
or, a negative binomial distribution, {\em i.e.}, for $m \ge 1$ an integer,
\begin{equation} \label{binneg}
P(y(t_i)= y|x(t_i)=x)=\binom{m+y-1}{y} x^{m} (1-x)^{y}, \quad y=0,1,2, \ldots.
\end{equation}
Under these assumptions, the
joint process  $(x(t_n), y(t_n))$ is a hidden Markov model (see {\em
  e.g.} Capp\'e et {\em al.}, 2005).

  In this context, a  central problem  that has been the subject of a
 huge number of contributions is the problem of  filtering, prediction
 or smoothing, {\em
   i.e.} the study of the conditional distributions of $x(t_l)$ given
 $y(t_n), \ldots, y(t_{1})$, with $l=n$ (filtering), $l=n+1,n+2, \ldots$
 (prediction), $l<n$ (smoothing). These distributions are generally
 called filters (respectively exact, prediction or marginal smoothing
 filters).  Although they may  be calculated
 recursively by  explicit algorithms,  iterations become rapidly
 intractable and exact formulae are difficult to
 obtain. To overcome this difficulty, authors generally try to find a
 parametric family ${\cal F}$ of distributions on the state space of $(x(t_n))$ ({\em i.e.} a family of distributions specified
 by a finite fixed number of real parameters) such that if ${\cal L}(x_0) \in {\cal F}$, then,
 for all $n,l$, ${\cal L}(x(t_l)|y(t_n), \ldots, y(t_{1}))$  belongs to ${\cal F}$. For such models, the term finite-dimensional filters is usually employed. This situation
 is illustrated by the linear Gaussian Kalman filter (see {\em e.g.}
 Capp\'e {\em et al.} 2005). There are few models satisfying the same
 properties as the Kalman filter: It is rather restrictive to impose a
 parametric family with a fixed number of parameters (see  Sawitzki
 (1981); see also Runggaldier and Spizzichino (2001)). 

Recently, new models where explicit computations are possible and which 	are not finite-dimensional filters have been proposed (see Genon-Catalot (2003), and Genon-Catalot and Kessler (2004)). Moreover, in a previous paper (Chaleyat-Maurel and Genon-Catalot, 2006), we have
introduced the notion of computable filters for the problem of
filtering and prediction. Instead of considering a parametric
class ${\cal F}$, we consider an enlarged class built using mixtures
of parametric distributions. The conditional distributions are specified by a finite number of
parameters, but this number may vary according to $n,l$. Still,
filters are computable explicitly. We give sufficient conditions on the
transition operator of $(x(t))$ and on the conditional distribution of
$y(t_i)$ given $x(t_i)$ to obtain such kind of filters. In the present paper,
we show that these conditions are satisfied by the model
above. Therefore, the conditional distributions of filtering and prediction are computable and we give the exact algorithm leading to these distributions. Moreover, we obtain the marginal smoothing distributions which are also given by an explicit and exact algorithm.  

The paper is organized as follows. In Section  \ref{pwf}, we briefly
recall some properties of the Wright-Fisher diffusion. In Section
\ref{suff}, we recall  the filtering-prediction algorithm and the
sufficient conditions of Chaleyat-Maurel and Genon-Catalot (2006) to
obtain computable filters. Section \ref{main} contains our main
results.  We introduce the class ${\bar {\cal F}}_{f}$ composed of
finite mixtures of parametric distributions fitted to the model (see
(\ref{fbar})).  We prove that the sufficient conditions hold for this
class and give the explicit formulae for the up-dating and the
prediction operator (Proposition \ref{c1}, Theorem \ref{miji},
Proposition \ref{mixiji}). The result concerning the prediction
operator is the most difficult part and requires several steps. Then,
we turn back in more details to the filtering-prediction algorithm
(Proposition \ref{algo}). Moreover, we give the exact distribution of
$(y(t_i), i=1, \ldots,n)$  which is also explicit. Hence, if
$\delta,\delta'$ are unknown and are to be estimated from the data set
$(y(t_i), i=1, \ldots,n)$, the exact maximum likelihood estimators of
these parameters can
be computed. At last, in Subsection \ref{margsmooth}, we study
marginal smoothing. We recall some classical formulae for computing
the marginal smoothing distributions. These formulae involve the
filtering distributions, that we have obtained in the previous
section, and  complementary terms that can be computed thanks to
Theorem \ref{miji}. In the Appendix, some technical proofs and auxiliary results are gathered.

\section{Properties of the Wright-Fisher diffusion model.}\label{pwf}
In order to exhibit the adequate class of distributions within which
the filters evolve, we need to recall some elementary properties of
model (\ref{wf}). The scale density is given by:
\begin{equation} \nonumber
s(x)= \exp{(-(1/2) \int^{x} \frac{-\delta u + \delta'(1-u)}{u(1-u)} du)}= x^{-\delta'/2} (1-x)^{-\delta/2}, \quad x \in (0,1).
\end{equation}
It satifies $\int_0 s(x) dx= \infty= \int^{1} s(x) dx$ if and only if
$\delta \ge 2$ and $\delta' \ge 2$, conditions that we assume from now
on. The speed density is equal to $m(x)= x^{\delta'/2 -1}
(1-x)^{\delta/2 -1}$, $x \in (0,1)$. Therefore, the unique stationary
distribution of (\ref{wf}) is the Beta distribution with parameters
$\delta'/2, \delta/2$  which has density
\begin{equation} \label{beta}
\pi(x)= \frac{x^{\frac{\delta'}{2} -1}
(1-x)^{\frac{\delta}{2} -1}}{B(\frac{\delta'}{2}, \frac{\delta}{2})} 1_{(0,1)}(x).
\end{equation}
For simplicity, we assume that the instants of observations are
equally spaced with sampling interval $\Delta$, {\em i.e.} $t_n= n
\Delta$, $n \ge 1$. Hence, the process $(X_n:=x(t_n))$ is a time-homogeneous
Markov chain. We denote by $p_{\Delta}(x,x')$ its transition density
and by $P_{\Delta}$ its transition operator. The transition density is
not explicitly known. However, it has a precise spectral expansion
(see {\em e.g.} Karlin and Taylor, 1981, p.335-336: Note that
$2x(t)-1$ is a Jacobi diffusion process). The results we obtain below
are linked with this spectral expansion although we do not use it directly (see the Appendix).
\section{Sufficient conditions for computable 
  filters.} \label{suff}
First, we focus on filtering and prediction and we consider case
(\ref{bin}) with $N=1$ for the conditional distributions of $Y_i:=y(t_i)$ given
$X_i=x(t_i)$, {\em i.e.}, we consider a Bernoulli conditional distribution. The other cases can be easily deduced afterwards (see the Appendix). Let us set
\begin{equation} \label{fxy}
P(Y_i=y|X_i=x)=f_x(y)= x^{y} (1-x)^{1-y}, y=0,1, x \in (0,1).
\end{equation}
We consider, on the finite set $\{0,1\}$, the dominating measure
$\mu(y)=1, y=0,1$. Thus, $f_x(y)$ is the density of ${\cal
  L}(Y_i|X_i=x)$ with respect to $\mu$.
%\subsection{The filtering- prediction algorithm.}
\subsection{Conditional distributions for filtering, prediction and
  statistical inference.}
Denote by 
\begin{equation} \label{cond}
\nu_{l|n:1}={\cal L}(X_{l}\mid Y_{n},\ldots,Y_1),
\end{equation}
the conditional distribution of $X_l$ given
$(Y_n, Y_{n-1}, \ldots, Y_1)$. For $l=n$, this distribution is called
the optimal or exact filter and is used to estimate the unobserved
variable $X_n$ in an {\em on-line} way.  For $l=n+1$, the distribution
is called the prediction filter and is used to predict $X_{n+1}$ from
past values of the $Y_i$'s. For $1\le l<n$, it is a marginal smoothing distribution and is
used to estimate past data or to improve estimates obtained by exact filters.
 
It is well known that the exact and prediction filters can be obtained
recursively (see {\em e.g.} Capp\'e {\em et al.} (2005)). First,
starting with $\nu_{1|0:1}={\cal L}(X_1)$, we have
\begin{equation} \label{exactfilter}
\nu_{n|n:1}(dx) \propto \nu_{n|n-1:1}(dx) f_x(Y_n).
\end{equation}
Hence,
\begin{equation} \label{updating}
\nu_{n|n:1}= \varphi_{Y_n}( \nu_{n|n-1:1})
\end{equation}
is obtained by the operator $\varphi_y$ with $y=Y_n$ where, for $\nu$
a probability on $(0,1)$, 
$\varphi_{y}(\nu)$ is defined by:
\begin{equation} \label{phi}
\varphi_{y}(\nu)(dx)= \frac{f_{x}(y) \nu(dx)}{p_{\nu}(y)}, \quad
\mbox{with} \quad p_{\nu}(y)=  \int_{(0,1)}\nu(d\xi) f_{\xi}(y).
\end{equation}
 This step is the
up-dating step which allows to take into account a new
observation. Then, we have the prediction step
\begin{equation} \label{prediction}
\nu_{n+1|n:1}(dx')= \int_{(0,1)} \nu_{n|n:1}(dx) p_{\Delta}(x,x') dx'=\nu_{n|n:1}P_{\Delta}(dx'),
\end{equation} 
which consists in applying the transition operator: $\nu \rightarrow
\nu P_{\Delta}$.
These properties are obtained using that the joint process
$(X_n,Y_n)$ is Markov with transition
$p_{\Delta}(x_n,x_{n+1})f_{x_{n+1}}(y_{n+1}) dx_{n+1} \mu(dy_{n+1})$, and
initial distribution $\nu_{1|0:1}(dx_1) f_{x_1}(y_1)\mu(dy_1)$. Moreover,
the conditional distribution of $Y_n$ given $(Y_{n-1},\ldots, Y_1)$
has a density with respect to $\mu$, given by
\begin{equation} \label{marginalen}
p_{n|n-1:1}(y_n)= p_{\nu_{n|n-1:1}}(y_n)= \int_{(0,1)}
\nu_{n|n-1:1}(dx) f_x(y_n).
\end{equation}
Note that $(Y_n)$ is not Markov and  that the above
distribution effectively depends on all previous variables. For statistical inference based on $(Y_1, \ldots, Y_n)$,
the exact likelihood is given by
\begin{equation} \label{likelihood}
p_n(Y_1, \ldots, Y_n)= \prod_{i=1}^{n} p_{\nu_{i|i-1:1}}(Y_i).
\end{equation}

\subsection{Sufficient conditions for computable filters.}
Now, we recall the sufficient conditions of Chaleyat-Maurel and
Genon-Catalot (2006). First, consider a parametric class ${\cal F}=
\{\nu_{\theta}, \theta \in \Theta\}$ of
distributions on $(0,1)$, where $\Theta$ is a parameter set included
in $\R^{p}$, such that:
\begin{itemize}
\item [$\bullet$ C1:] For $y=0,1$, for all $\nu \in {\cal F}$,
  $\varphi_{y}(\nu)$ belongs to ${\cal F}$, {\em i.e.}
  $\varphi_{y}(\nu_\theta)= \nu_{T_{y}(\theta)}$, for some
  $T_{y}(\theta) \in \Theta$.
\item [$\bullet$ C2:] For all $\nu \in {\cal F}$, $\nu P_{\Delta}= \sum_{\lambda \in \Lambda}
  \alpha_\lambda \nu_{\theta_\lambda}$ is a finite mixture of elements
  of the class ${\cal F}$, {\em i.e.} $\Lambda$ is a finite set,
  $\alpha= (\alpha_\lambda, \lambda \in \Lambda)$ is a mixture
  parameter such that, for all $\lambda$, $\alpha_\lambda \ge 0$ and
  $\sum_{\lambda \in \Lambda} \alpha_\lambda =1$, and $\theta_\lambda
  \in \Theta$, for all $\lambda \in \Lambda$.
\end{itemize}
\begin{prop}\label{suffi}
Consider now the extended class ${\bar {\cal F}}_{f}$ composed of finite
mixtures of distributions of ${\cal F}$. Then, under (C1)-(C2), the 
operators $\varphi_y$, $y=0,1$ and $\nu \rightarrow \nu P_{\Delta}$
are from  ${\bar {\cal F}}_{f}$ into  ${\bar {\cal
    F}}_{f}$. Therefore, once starting with $\nu_{1|0:1}={\cal
  L}(X_1) \in  {\bar {\cal F}}_{f}$, all the distributions
$\nu_{n|n:1}$  (exact filters) and $\nu_{n+1|n:1}$ (prediction
filters) belong to  ${\bar {\cal F}}_{f}$. 
\end{prop}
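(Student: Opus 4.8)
The plan is to establish the two stability properties of $\bar{\cal F}_f$ separately — one for the prediction operator $\nu\mapsto\nu P_\Delta$, one for the up-dating operators $\varphi_y$, $y=0,1$ — and then to read off the statements about $\nu_{n|n:1}$ and $\nu_{n+1|n:1}$ by a direct induction on $n$ along the recursion (\ref{exactfilter})--(\ref{prediction}). Throughout I would write a generic element of $\bar{\cal F}_f$ as $\nu=\sum_{i\in I}\beta_i\nu_{\theta_i}$, with $I$ finite, $\beta_i\ge0$, $\sum_{i\in I}\beta_i=1$, $\theta_i\in\Theta$.

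For the prediction step the key observation is that $\nu\mapsto\nu P_\Delta$ is linear, so $\nu P_\Delta=\sum_{i\in I}\beta_i(\nu_{\theta_i}P_\Delta)$; by (C2) each $\nu_{\theta_i}P_\Delta$ is itself a finite mixture of elements of ${\cal F}$, and a convex combination of finitely many finite mixtures is again a finite mixture (the weights multiply out, remain nonnegative, and sum to $1$). Hence $\nu P_\Delta\in\bar{\cal F}_f$.

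The up-dating step is the only place requiring a little care, because $\varphi_y$ is not linear: it carries the normalising constant $p_\nu(y)$. With $\nu$ as above, linearity of $\nu\mapsto p_\nu(y)$ gives $p_\nu(y)=\sum_{i\in I}\beta_i p_{\nu_{\theta_i}}(y)$, while the definition (\ref{phi}) applied to each $\nu_{\theta_i}$ reads $f_x(y)\nu_{\theta_i}(dx)=p_{\nu_{\theta_i}}(y)\,\varphi_y(\nu_{\theta_i})(dx)$. Substituting,
\[
\varphi_y(\nu)(dx)=\frac{\sum_{i\in I}\beta_i f_x(y)\nu_{\theta_i}(dx)}{p_\nu(y)}
=\sum_{i\in I}\frac{\beta_i\,p_{\nu_{\theta_i}}(y)}{\sum_{j\in I}\beta_j\,p_{\nu_{\theta_j}}(y)}\;\varphi_y(\nu_{\theta_i})(dx),
\]
and by (C1) each $\varphi_y(\nu_{\theta_i})=\nu_{T_y(\theta_i)}$ lies in ${\cal F}$, while the new weights $\beta_i'=\beta_i p_{\nu_{\theta_i}}(y)/\sum_{j\in I}\beta_j p_{\nu_{\theta_j}}(y)$ are nonnegative and sum to $1$; hence $\varphi_y(\nu)\in\bar{\cal F}_f$. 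One should note that this computation is meaningful exactly when $p_\nu(y)>0$: for $f_x$ as in (\ref{fxy}) and any $\nu$ supported in $(0,1)$ one has $p_\nu(y)\in(0,1)$, and in any case the up-dating (\ref{updating}) is only defined when $p_\nu(y)>0$.

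Finally, starting from $\nu_{1|0:1}={\cal L}(X_1)\in\bar{\cal F}_f$, the recursion (\ref{exactfilter})--(\ref{prediction}) produces $\nu_{n|n:1}=\varphi_{Y_n}(\nu_{n|n-1:1})$ and $\nu_{n+1|n:1}=\nu_{n|n:1}P_\Delta$, so alternating the two stability properties just proved and inducting on $n$ keeps every $\nu_{n|n:1}$ and $\nu_{n+1|n:1}$ in $\bar{\cal F}_f$. I do not expect a genuine obstacle here: the only non-routine ingredient is the re-weighting of the mixture under $\varphi_y$ displayed above, and everything else is linearity and bookkeeping.
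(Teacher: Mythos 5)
Your proposal is correct and follows essentially the same route as the paper: the paper calls the proof elementary, defers to Theorem 2.1 of Chaleyat-Maurel and Genon-Catalot (2006), and spells out exactly your two computations (the re-weighting of the mixture by $\beta_i\,p_{\nu_{\theta_i}}(y)$ under $\varphi_y$, and the linearity plus interchange of sums under $P_\Delta$) in its Proposition \ref{algo}. Your remark on the positivity of the normalising constant $p_\nu(y)$ is a sensible precision that the paper leaves implicit.
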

For all $n$, these
distributions are completely specified by their mixture parameter and
the finite set of distributions involved in the mixture. Of course,
the number of components may vary along the iterations, but still
remains finite. Thus, these distributions are explicit and we say that filters are computable. 

The proof of Proposition \ref{suffi} is elementary (see Theorem 2.1, p.1451,
Chaleyat-Maurel and Genon-Catalot, 2006, see also Proposition \ref{algo} below). Now, it has a true impact
because the extended class is considerably larger than the initial
parametric class. Evidently, the difficulty is to find models
satisfying these conditions. Examples are given in Chaleyat-Maurel and
Genon-Catalot (2006): The hidden Markov process $(x(t))$ is a radial
Ornstein-Uhlenbeck process and two cases of conditional distributions
of $y(t_i)$ given $x(t_i)$ are proposed. Other examples are given in Genon-Catalot (2003) and Genon-Catalot and Kessler (2004).
Here, we study a new and completely different model. It has the
noteworthy feature that, to compute filters, the explicit formula for the
transition density of $(x(t))$ is not required, contrary to the
examples of the previous papers. (Details on the transition density
are given in the Appendix).

\section{Main results.}\label{main}
Our main results consists in exhibiting the proper parametric class of
distributions on $(0,1)$ and in checking (C1)-(C2) for this class and
the model specified by (\ref{wf}) and (\ref{fxy}). The interest of these
conditions is that they can be checked separately. Condition (C1) only
concerns the conditional distributions of $Y_i$ given $X_i$ and the
class ${\cal F}$.  In the Appendix, we prove condition (C1) for the
model specified by (\ref{wf}) and (\ref{bin}) or (\ref{binneg}). Condition
(C2) concerns the transition operator of $(x(t))$. It is the
most difficult part. Let us define the following class of distributions indexed
by $\Theta= \N \times \N$:
\begin{equation} \label{f}
{\cal F}= \{ \nu_{i,j}(dx)  \propto h_{i,j}(x) \pi(x) dx, (i,j) \in \N \times \N\},
\end{equation}
where
\begin{equation} \label{hij}
h_{i,j}(x)= x^{i} (1-x)^{j}.
\end{equation}
Hence, each distribution in ${\cal F}$ is a Beta distribution with
parameters $(i+ \frac{\delta'}{2},j+ \frac{\delta}{2})$ and (see (\ref{beta}))
\begin{equation} \label{nuij}
 \nu_{i,j}(dx)= \frac{x^{i+\frac{\delta'}{2} -1}
(1-x)^{j+\frac{\delta}{2} -1}}{B(i+\frac{\delta'}{2},
j+\frac{\delta}{2})} 1_{(0,1)}(x) dx.
\end{equation}
Let us define the extended class:
\begin{equation} \label{fbar}
 {\bar {\cal F}}_{f}= \{\sum_{(i,j) \in \Lambda}
  \alpha_{i,j} \nu_{i,j}, \Lambda \subset \N \times \N,
  |\Lambda|<+\infty, \alpha=(\alpha_{i,j}, (i,j) \in \Lambda) \in S_f\},
\end{equation}
where 
\begin{equation} \label{sf}
S_f= \{\alpha=(\alpha_{i,j}, (i,j) \in \Lambda), \Lambda \subset \N \times \N,
  |\Lambda|<+\infty, \forall (i,j), \alpha_{i,j} \ge 0,
  \sum_{(i,j) \in \Lambda}\alpha_{i,j}=1 \}
\end{equation}
is the set of finite mixture parameters. It is worth noting that the
stationary distribution $\pi(x)dx= \nu_{0,0}(dx)$ belongs to ${\cal
  F}$. Thus, in the important case where the initial distribution, {\em i.e.} the
distribution of $\eta$ (see (\ref{wf})), is the stationary
distribution, the exact and optimal filters have an explicit formula.

\subsection{Condition (C1): Conjugacy.}
\begin{prop} \label{c1} Let $\nu_{i,j} \in {\cal F}$ (see (\ref{f})). 
\begin{enumerate}
\item  For $y=0,1$, $\varphi_y(\nu_{i,j})= \nu_{i+y, j+1-y}.$ Hence, (C1) holds.
\item The marginal distribution
  is given by
\begin{equation} \label{margij}
p_{\nu_{i,j}}(y)= \left(\frac{i +\frac{\delta'}{2}}{i+j+ \frac{\delta'+\delta}{2}}\right)^{y}\left(\frac{j +\frac{\delta}{2}}{i+j+ \frac{\delta'+\delta}{2}}\right)^{1-y}, y=0,1.
\end{equation}
\end{enumerate}
\end{prop}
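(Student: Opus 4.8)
This is a conjugacy computation, so the proof should just be a direct calculation using the definitions of $\varphi_y$, $f_x$, and $\nu_{i,j}$. First I would write out $\varphi_y(\nu_{i,j})(dx)$ from (\ref{phi}): it equals $f_x(y)\,\nu_{i,j}(dx)$ divided by the normalizing constant $p_{\nu_{i,j}}(y)$. Since $f_x(y)=x^y(1-x)^{1-y}$ (see (\ref{fxy})) and $\nu_{i,j}(dx)\propto x^{i+\delta'/2-1}(1-x)^{j+\delta/2-1}\,dx$ (see (\ref{nuij})), the product is proportional to $x^{i+y+\delta'/2-1}(1-x)^{j+1-y+\delta/2-1}\,dx$, which is exactly the unnormalized density of $\nu_{i+y,j+1-y}$. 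Because both sides are probability measures on $(0,1)$, the normalizing constants must agree and hence $\varphi_y(\nu_{i,j})=\nu_{i+y,j+1-y}$, which is the first assertion and gives (C1) since $(i+y,j+1-y)\in\N\times\N$ for $y\in\{0,1\}$.

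**The marginal.** For the second part I would compute $p_{\nu_{i,j}}(y)=\int_{(0,1)}\nu_{i,j}(d\xi)\,f_\xi(y)$ directly. For $y=1$ this is $\E_{\nu_{i,j}}[\xi]$, and for $y=0$ it is $\E_{\nu_{i,j}}[1-\xi]$; since $\nu_{i,j}$ is the Beta$(i+\delta'/2,\,j+\delta/2)$ distribution, its mean is $(i+\delta'/2)/(i+j+(\delta'+\delta)/2)$, which yields (\ref{margij}). Alternatively, and perhaps cleaner for the write-up, one reads off the constant from the Beta-integral identity used in the first part: the ratio of normalizing constants is $B(i+y+\delta'/2,\,j+1-y+\delta/2)/B(i+\delta'/2,\,j+\delta/2)$, and simplifying the Gamma functions (using $\Gamma(z+1)=z\Gamma(z)$) gives the same expression. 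Writing it in the unified form $(\cdot)^y(\cdot)^{1-y}$ is just bookkeeping over the two cases $y=0,1$.

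**Main obstacle.** There is essentially no obstacle here; the only care needed is to keep track of the Beta-function normalizing constants so that the identity $B(i+\delta'/2,j+\delta/2)^{-1}\int_0^1 x^{i+y+\delta'/2-1}(1-x)^{j+1-y+\delta/2-1}\,dx = B(i+y+\delta'/2,j+1-y+\delta/2)/B(i+\delta'/2,j+\delta/2)$ is used correctly, and then to simplify the resulting ratio of Gamma functions. I would present part 1 first (it is immediate from proportionality plus the fact that both measures are normalized), and then extract part 2 either as the mean of the Beta distribution or as the leftover constant from the same computation.
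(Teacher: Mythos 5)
Your proposal is correct and follows essentially the same route as the paper: part 1 by the proportionality $f_x(y)\,\nu_{i,j}(dx)\propto x^{i+y+\delta'/2-1}(1-x)^{j+1-y+\delta/2-1}\,dx$, and part 2 by identifying $p_{\nu_{i,j}}(y)$ with the ratio $B(i+y+\delta'/2,\,j+1-y+\delta/2)/B(i+\delta'/2,\,j+\delta/2)$ and simplifying via $B(a,b)=\Gamma(a)\Gamma(b)/\Gamma(a+b)$ and $\Gamma(a+1)=a\Gamma(a)$. Your alternative phrasing of part 2 as the mean of the Beta distribution is an equivalent, equally valid shortcut.
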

\begin{proof} The first point is obtained using that $f_x(y)
  \nu_{i,j}(dx)\propto x^{i+y +\frac{\delta'}{2}-1}(1-x)^{j+1-y+
    \frac{\delta}{2}-1} dx\propto h_{i+y,j+1-y}(x) \pi(x)dx$. 

For the marginal
  distribution, we have 
\begin{equation}\nonumber
p_{\nu_{i,j}}(y)= \frac{B(i+y+ \frac{\delta'}{2}, j+1-y+ \frac{\delta}{2})}{B(i +\frac{\delta'}{2},j +\frac{\delta}{2})}.
\end{equation}
To get (\ref{margij}), we use the classical relations $B(a,b)=\frac{\Gamma(a)\Gamma(b)}{\Gamma(a+b)}$ and 
$\Gamma(a+1)=a\Gamma(a)$, $a,b>0$ where $\Gamma(.)$ is the standard Gamma function.
\end{proof}
Proposition \ref{c1} is proved for the conditional distributions (\ref{bin}) and (\ref{binneg}) in the Appendix.

\begin{rem}\label{conjug}
Proposition \ref{c1} states that the class ${\cal F}$ is a conjugate
class for the parametric family of distributions on $\{0,1\}$: $y
\rightarrow f_x(y)\mu(dy)$ with respect to the parameter $x \in
(0,1)$. The property that Beta distributions are conjugate with
respect to Bernoulli distributions is well known in Bayesian statistics (see {\em
  e.g.} West and Harrison (1997)). Analogously, the class ${\cal F}$ is also a conjugate
class for the parametric family of distributions (\ref{bin}) and
(\ref{binneg}) with respect to $x \in
(0,1)$.
\end{rem}
\subsection{Condition (C2): Introducing mixtures.}
The class ${\cal F}$ is the natural class of distributions to consider for the conditional distributions (\ref{fxy}) because this class contains the stationary distribution  $\pi(x)dx=\nu_{0,0}(dx)$ of (\ref{wf}). We do have $\nu_{0,0} P_{\Delta}=\nu_{0,0}$. However, when $(i,j) \neq (0,0)$, $\nu_{i,j} P_{\Delta}$  no more belongs to ${\cal F}$ but belongs to ${\bar {\cal F}}_{f}$ as we prove below. We need some preliminary properties.
\begin{prop}\label{revers}
For all Borel positive function $h$ defined on $(0,1)$,  if $\nu(dx)= h(x) \pi(x) dx$, then, $\nu P_{t}(dx')= P_{t}h(x') \pi(x') dx'$, where, for $t \ge 0$,
$$
P_{t}h(x')= \int_0^{1}h(x)p_{t}(x',x)dx.
$$
\end{prop}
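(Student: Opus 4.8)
The plan is to derive the statement from the reversibility of the Wright--Fisher diffusion with respect to its stationary law $\pi$. Recall from Section~\ref{pwf} that $(x(t))$ is a one-dimensional diffusion on $(0,1)$ whose speed density $m(x)=x^{\delta'/2-1}(1-x)^{\delta/2-1}$ is proportional to $\pi$. A one-dimensional diffusion is reversible with respect to its speed measure (see {\em e.g.} Karlin and Taylor (1981)); equivalently, its infinitesimal generator is self-adjoint in $L^2(\pi)$. In terms of the transition density $p_t(x,x')$ taken with respect to Lebesgue measure, I would phrase this reversibility as the detailed balance identity
\begin{equation}\label{detbal}
\pi(x)\,p_t(x,x')=\pi(x')\,p_t(x',x),\qquad x,x'\in(0,1),\ t\ge 0,
\end{equation}
and this is the one fact I would establish first.

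Once (\ref{detbal}) is available, the conclusion follows by a one-line computation. With $h$ a positive Borel function on $(0,1)$ and $\nu(dx)=h(x)\pi(x)dx$, the definition of the prediction step (see (\ref{prediction})) together with Tonelli's theorem --- legitimate because all integrands are nonnegative, so the quantities make sense in $[0,+\infty]$ even when $\nu$ is not a probability measure --- gives
$$
\nu P_t(dx')=\Big(\int_{(0,1)}h(x)\,\pi(x)\,p_t(x,x')\,dx\Big)dx'=\Big(\int_{(0,1)}h(x)\,\pi(x')\,p_t(x',x)\,dx\Big)dx'=\pi(x')\Big(\int_{(0,1)}h(x)\,p_t(x',x)\,dx\Big)dx',
$$
where the middle step uses (\ref{detbal}) and the last one pulls $\pi(x')$, which is free of $x$, out of the integral. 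The bracketed term is precisely $P_t h(x')$, so $\nu P_t(dx')=P_t h(x')\,\pi(x')\,dx'$, as claimed.

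I expect the only real obstacle to be the justification of (\ref{detbal}); everything else is bookkeeping. Beyond quoting the general one-dimensional theory, one alternative I would keep in reserve is to read (\ref{detbal}) off the spectral expansion of $p_t$ recalled in the Appendix: writing $p_t(x,x')=\pi(x')\sum_{n\ge 0}e^{-\lambda_n t}\varphi_n(x)\varphi_n(x')$ with $(\varphi_n)$ the Jacobi polynomials orthonormal in $L^2(\pi)$, the product $\pi(x)p_t(x,x')=\pi(x)\pi(x')\sum_{n\ge 0}e^{-\lambda_n t}\varphi_n(x)\varphi_n(x')$ is manifestly symmetric in $(x,x')$, which is (\ref{detbal}). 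In the write-up I would present the reversibility argument in the body and relegate this spectral justification to the Appendix, in line with the organization announced in the Introduction.
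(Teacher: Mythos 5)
Your proposal is correct and follows essentially the same route as the paper: both rest on the detailed-balance identity $\pi(x)p_t(x,x')=\pi(x')p_t(x',x)$, quoted as the well-known reversibility of one-dimensional diffusions with respect to their speed density, followed by the same substitution under the integral. The paper simply states ``this gives the result'' where you write out the Tonelli step explicitly, and your optional spectral justification of detailed balance is consistent with the expansion recalled in the paper's Appendix.
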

\begin{proof}It is well known that one-dimensional diffusion processes are reversible with respect to their speed density: The transition $p_t(x,x')$ is reversible with respect to $\pi(x)dx$, {\em i.e.} satisfies for all $(x,x') \in (0,1) \times (0,1)$:
\begin{equation}\label{rev}
\pi(x) p_t(x,x')= \pi(x') p_t(x',x).
\end{equation}
This gives the result.
\end{proof}
\begin{prop}\label{pth} Suppose that:
\begin{itemize}
\item [$\bullet$] (C3) For all $(n,p) \in \N \times \N$, there exists
  a set $\Lambda_{n,p} \subset \N \times \N$ such that $|\Lambda_{n,p}|<+\infty$  and for all $t\ge 0$, 
\begin{equation}\label{pthnp}
P_t h_{n,p}(.)= \sum_{(i,j) \in \Lambda_{n,p}}B_{i,j}(t) h_{i,j}(.), 
\end{equation}
with, for all $(i,j)$, and all $t\ge 0$, $B_{i,j}(t) \ge 0$. 
\end{itemize}
Then, condition (C2) holds for $P_t$ for all $t \ge 0$. Moreover, 
\begin{equation}\nonumber
\nu_{n,p}P_t(dx) = \sum_{(i,j) \in \Lambda_{n,p}}\alpha_{i,j}(t) \nu_{i,j}(dx), 
\end{equation}
where $\alpha(t)=(\alpha_{i,j}(t), (i,j) \in \Lambda_{n,p})$ belongs to $S_f$ (see (\ref{sf})) and  
\begin{equation}\label{alphaij}
\alpha_{i,j}(t)= B_{i,j}(t)  \frac{B(i+\frac{\delta'}{2},j+\frac{\delta}{2})}{B(n+\frac{\delta'}{2},p+\frac{\delta}{2})}.
\end{equation} 
\end{prop}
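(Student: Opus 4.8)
The plan is to combine the reversibility identity of Proposition~\ref{revers} with the expansion postulated in (C3) and the explicit normalizing constants of the Beta densities $\nu_{i,j}$ recorded in (\ref{nuij}); the whole argument is essentially bookkeeping with the Beta normalizers, so I will keep it short.

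First I would note that, by (\ref{nuij}), $\nu_{n,p}(dx)= h_{n,p}(x)\pi(x)\,dx / B(n+\frac{\delta'}{2},p+\frac{\delta}{2})$, and that $h_{n,p}(x)=x^{n}(1-x)^{p}$ is a Borel positive function on $(0,1)$. Applying Proposition~\ref{revers} with $h=h_{n,p}$ gives
$$\nu_{n,p}P_t(dx')=\frac{P_t h_{n,p}(x')}{B(n+\frac{\delta'}{2},p+\frac{\delta}{2})}\,\pi(x')\,dx'.$$
Substituting hypothesis (\ref{pthnp}), $P_t h_{n,p}(x')=\sum_{(i,j)\in\Lambda_{n,p}}B_{i,j}(t)h_{i,j}(x')$, and then rewriting each term by means of (\ref{nuij}) as $h_{i,j}(x')\pi(x')\,dx'=B(i+\frac{\delta'}{2},j+\frac{\delta}{2})\,\nu_{i,j}(dx')$, one collects the constants and obtains exactly the asserted identity $\nu_{n,p}P_t(dx')=\sum_{(i,j)\in\Lambda_{n,p}}\alpha_{i,j}(t)\,\nu_{i,j}(dx')$ with $\alpha_{i,j}(t)$ as in (\ref{alphaij}).

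It then remains to verify that $\alpha(t)=(\alpha_{i,j}(t),(i,j)\in\Lambda_{n,p})$ belongs to $S_f$ (see (\ref{sf})). Nonnegativity is immediate, since $B_{i,j}(t)\ge 0$ by (C3) and the Beta functions are strictly positive. For the normalization $\sum_{(i,j)\in\Lambda_{n,p}}\alpha_{i,j}(t)=1$, I would integrate the identity just established over $(0,1)$: the left-hand side $\nu_{n,p}P_t$ is a probability measure because $P_t$ is a Markov kernel and $\nu_{n,p}$ is a probability, while each $\nu_{i,j}$ has total mass $1$; hence the coefficients sum to $1$. (Equivalently one could take $\pi$-averages in (\ref{pthnp}) and use $P_t 1 = 1$, but the probabilistic argument is cleaner.) Since $\Lambda_{n,p}$ is finite by (C3), $\nu_{n,p}P_t$ is a genuine finite mixture of elements of ${\cal F}$; and as every element of ${\cal F}$ is of the form $\nu_{n,p}$, condition (C2) holds for $P_t$ for every $t\ge 0$, in particular for $P_{\Delta}$.

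I do not expect any real obstacle inside this proposition: given Proposition~\ref{revers}, it reduces to the change of normalizing constant between $h_{i,j}\pi$ and $\nu_{i,j}$ together with the soft observation that a Markov image of a probability measure is a probability measure. The genuine difficulty is displaced into checking hypothesis (C3) itself — exhibiting the finite index set $\Lambda_{n,p}$ and the nonnegative coefficients $B_{i,j}(t)$ in the expansion (\ref{pthnp}) of $P_t h_{n,p}$ — which is where the spectral structure of the Wright-Fisher semigroup comes in and which is handled separately.
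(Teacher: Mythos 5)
Your proof is correct and follows essentially the same route as the paper's: apply the reversibility identity of Proposition \ref{revers} to $h_{n,p}\pi$, substitute the expansion postulated in (C3), convert each $h_{i,j}\pi$ back into $\nu_{i,j}$ via its Beta normalizer, and note that the coefficients are nonnegative and sum to $1$ because $\nu_{n,p}P_t$ and the $\nu_{i,j}$ are all probability measures (the paper leaves this last point implicit, so making it explicit is a small improvement). The only slip is that your intermediate identities drop the normalizer $B(\frac{\delta'}{2},\frac{\delta}{2})$ of $\pi$ --- correctly, $\nu_{n,p}(dx)=\frac{B(\delta'/2,\,\delta/2)}{B(n+\delta'/2,\,p+\delta/2)}\,h_{n,p}(x)\pi(x)\,dx$ and $h_{i,j}(x)\pi(x)\,dx=\frac{B(i+\delta'/2,\,j+\delta/2)}{B(\delta'/2,\,\delta/2)}\,\nu_{i,j}(dx)$ --- but since the same factor is omitted on both sides it cancels and your final formula agrees with (\ref{alphaij}).
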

\begin{proof}We have (see (\ref{nuij})), for all$(n,p)$,
\begin{equation}\nonumber
\nu_{n,p}P_t =\frac{B(\frac{\delta'}{2},\frac{\delta}{2})}{B(n+\frac{\delta'}{2},p+\frac{\delta}{2})}(h_{n,p} \pi)P_t.
\end{equation}
Using Proposition \ref{revers}, we get:
\begin{equation}\nonumber
(h_{n,p}\pi)P_t (dx)=P_t h_{n,p}(x) \pi(x) dx.
\end{equation}
Now,
\begin{equation}\nonumber
P_t h_{n,p}(.) \pi(.) = \sum_{(i,j) \in \Lambda_{n,p}}B_{i,j}(t) h_{i,j}(.) \pi(.),
\end{equation}
and
\begin{equation}\nonumber
h_{i,j}(x) \pi(x) dx = \nu_{i,j}(dx)\frac{B(i+\delta'/2,j+\delta/2)}{B(\delta'/2, \delta/2)}.
\end{equation}
Joining all formulae, we get the result.
\end{proof}
Therefore, it remains to prove condition (C3). We start with a classical lemma.
\begin{lemma}\label{L} Let $h$ belong to the set $C_{b}^{2}((0,1))$ of bounded and twice continuously differentiable functions on $(0,1)$. Then, 
\begin{equation}\label{dt}
\frac{d}{dt}(P_th(x))= P_t L h(x), P_0h(x)=h(x),
\end{equation}
where $Lh(x)= 2 x(1-x) h''(x)+ [-\delta x + \delta'(1-x)]h'(x)$ is the infinitesimal generator of (\ref{wf}).
\end{lemma}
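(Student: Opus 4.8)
The plan is to recognize the statement as Dynkin's formula combined with the (backward) Kolmogorov equation, and to prove it via Itô's formula together with a few boundedness remarks. First I would note that, under the standing assumptions $\delta\ge 2$, $\delta'\ge 2$, both endpoints $0$ and $1$ are inaccessible for the diffusion (\ref{wf}), so that the solution started at $x\in(0,1)$ stays in $(0,1)$ at all times a.s.; hence only the values of $h$ on $(0,1)$ are relevant. For $h\in C_b^2((0,1))$, Itô's formula applied to $h(x(t))$ gives
\[
h(x(t))=h(x)+\int_0^t Lh(x(s))\,ds+2\int_0^t h'(x(s))\,[x(s)(1-x(s))]^{1/2}\,dW_s .
\]
The key elementary observation is that $Lh$ is bounded on $(0,1)$: indeed $h'$ and $h''$ are bounded while the coefficients $2x(1-x)$ and $-\delta x+\delta'(1-x)$ are bounded on $(0,1)$ (in fact $Lh$ extends continuously to $[0,1]$). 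Consequently the integrand $h'(x(s))[x(s)(1-x(s))]^{1/2}$ of the stochastic integral is bounded (since $x(1-x)\le 1/4$), so that integral is a genuine martingale with zero mean. Taking $\E_x$ on both sides and using Fubini (legitimate because $|Lh(x(s))|\le\|Lh\|_\infty$) yields the integral equation
\[
P_t h(x)=h(x)+\int_0^t \E_x[Lh(x(s))]\,ds=h(x)+\int_0^t P_sLh(x)\,ds,\qquad t\ge 0,
\]
where $P_sLh(x):=\E_x[Lh(x(s))]$ is well defined since $Lh$ is bounded and measurable.

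It then remains to differentiate this identity in $t$, which requires $s\mapsto P_sLh(x)$ to be continuous. Since $Lh$ is bounded and continuous on $(0,1)$ and $s\mapsto x(s)$ is a.s.\ continuous, dominated convergence shows that $s\mapsto \E_x[Lh(x(s))]$ is continuous on $[0,\infty)$, with value $Lh(x)$ at $s=0$. Therefore the right-hand side of the integral equation is a $C^1$ function of $t$, and differentiating gives $\frac{d}{dt}(P_th(x))=P_tLh(x)$, while $P_0h(x)=h(x)$ is immediate. (As an alternative avoiding Itô's formula, one can argue directly from the semigroup property: $s^{-1}(P_{t+s}h-P_th)=P_t\bigl(s^{-1}(P_sh-h)\bigr)$, and from the integral equation $s^{-1}(P_sh-h)\to Lh$ pointwise with $|s^{-1}(P_sh-h)|\le\|Lh\|_\infty$, so one passes to the limit under $P_t$ by dominated convergence.)

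The only points needing care — and where the hypotheses enter — are the martingale property of the stochastic integral and the interchanges of $\E_x$ with the time integral and with limits; all of these reduce to the single remark that $Lh$ (hence the relevant integrands) is bounded on $(0,1)$, together with the non-accessibility of the boundaries that confines the process to $(0,1)$. I expect this boundary bookkeeping — namely checking that functions in $C_b^2((0,1))$, which are not assumed to extend smoothly to $[0,1]$, are nonetheless legitimate test functions here because the endpoints are never visited — to be the only mildly delicate step; the rest is routine.
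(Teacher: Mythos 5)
Your proof is correct and follows essentially the same route as the paper's: apply It\^o's formula to $h(x(t))$, take expectations to kill the stochastic integral, and differentiate the resulting integral identity. You simply make explicit the technical points the paper leaves implicit (boundedness of $Lh$ and of the stochastic integrand on $(0,1)$, the martingale property, and the continuity of $s\mapsto P_sLh(x)$ needed to differentiate), which is a faithful elaboration rather than a different argument.
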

\begin{proof}Let $x_x(t)$ be the solution of (\ref{wf}) with initial condition $x_x(0)=x$. By the Ito formula,
$$
h(x_x(t))=h(x) + \int_0^{t}Lh(x_x(s)) ds +2 \int_0^{t}h'(x_x(s)) \left(x_x(s)(1-x_x(s))\right)^{1/2}dW_s.
$$
By the assumption on $h$, taking expectations yields:
$$
P_th(x)= h(x) + \int_0^{t} P_s Lh(x) ds,
$$
which is the result.
\end{proof}
Now, we start to compute $P_t h_{n,p}(x)$ for all $n,p \in \N$.
\begin{prop}\label{mnp} Let $m_{n,p}(t,.)= P_t h_{n,p}(.)$ Then, for all $n,p \in \N$,
\begin{equation} \label{dmnp}
\frac{d}{dt}m_{n,p}(t,.)= -a_{n+p}m_{n,p}(t,.) + c_n(\delta') m_{n-1,p}(t,.) + c_p(\delta) m_{n,p-1}(t,.), \quad m_{0,0}(t,.)=1,
\end{equation}
where, for all $n \in \N$,
\begin{equation}\label{ancn}
a_n=n[2(n-1)+\delta+\delta'],\quad  c_n(\delta)=n[2(n-1)+\delta].
\end{equation}
(If $p$ or $n$ is equal to $0$, then, $c_p(\delta)=0$
or $c_n(\delta')=0$, and  formula (\ref{dmnp}) holds). Note that,
since the expression of $a_n$ is symetric with respect to
$\delta,\delta'$, we do not mention the dependance on these parameters. Note also that for all $n$, since both $\delta$ and $\delta'$ are positive (actually $\ge 2$), the coefficients $a_n$ and $c_n(\delta),c_n(\delta')$ are non negative.
\end{prop}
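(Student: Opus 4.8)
The plan is to invoke Lemma \ref{L} directly with $h=h_{n,p}$. This is legitimate because $h_{n,p}(x)=x^n(1-x)^p$ is a polynomial, hence bounded on $(0,1)$ and twice (indeed infinitely) continuously differentiable there, so $h_{n,p}\in C_b^2((0,1))$. By (\ref{dt}) we then get $\frac{d}{dt}m_{n,p}(t,\cdot)=P_t(Lh_{n,p})(\cdot)$, so the whole proposition reduces to the purely algebraic identity
\[
Lh_{n,p}=-a_{n+p}\,h_{n,p}+c_n(\delta')\,h_{n-1,p}+c_p(\delta)\,h_{n,p-1},
\]
after which one applies the linear operator $P_t$ to both sides and uses $P_t h_{k,\ell}=m_{k,\ell}(t,\cdot)$ together with $P_t 1=1$, the latter giving $m_{0,0}(t,\cdot)=1$. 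The boundary conventions for $n=0$ or $p=0$ are automatically consistent, since $c_0(\delta)=c_0(\delta')=0$ suppresses the term containing the (non-polynomial) symbol $h_{-1,p}$ or $h_{n,-1}$.

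To establish the identity I would compute $h_{n,p}'(x)=n x^{n-1}(1-x)^p-p x^n(1-x)^{p-1}$ and $h_{n,p}''(x)=n(n-1)x^{n-2}(1-x)^p-2np\,x^{n-1}(1-x)^{p-1}+p(p-1)x^n(1-x)^{p-2}$, and substitute into $Lh(x)=2x(1-x)h''(x)+[-\delta x+\delta'(1-x)]h'(x)$. After clearing the factors $2x(1-x)$ and $-\delta x+\delta'(1-x)$, every monomial that appears has the form $x^a(1-x)^b$ with $(a,b)\in\{(n,p),(n-1,p+1),(n+1,p-1)\}$. The key step is then the pair of elementary reductions $x^{n-1}(1-x)^{p+1}=h_{n-1,p}-h_{n,p}$ and $x^{n+1}(1-x)^{p-1}=h_{n,p-1}-h_{n,p}$ (write $1-x$ as itself, resp. $x=1-(1-x)$); substituting them expresses $Lh_{n,p}$ in the basis $\{h_{n,p},h_{n-1,p},h_{n,p-1}\}$.

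It remains to collect coefficients. The coefficient of $h_{n-1,p}$ receives $2n(n-1)$ from the term $2n(n-1)x^{n-1}(1-x)^{p+1}$ of $2x(1-x)h''$ and $\delta' n$ from $\delta' n\,x^{n-1}(1-x)^{p+1}$ in the drift part, totalling $2n(n-1)+\delta' n=n[2(n-1)+\delta']=c_n(\delta')$; by the symmetry $n\leftrightarrow p$, $\delta'\leftrightarrow\delta$, the coefficient of $h_{n,p-1}$ is $2p(p-1)+\delta p=c_p(\delta)$. The coefficient of $h_{n,p}$ accumulates $-4np$ (the cross term of $2x(1-x)h''$), $-2n(n-1)-2p(p-1)$ (from reducing the two outer $h''$-monomials), $-\delta n-\delta' p$ (the diagonal drift contributions $-\delta n\,h_{n,p}-\delta' p\,h_{n,p}$), and $-\delta p-\delta' n$ (from reducing the drift monomials $x^{n+1}(1-x)^{p-1}$ and $x^{n-1}(1-x)^{p+1}$), which sums to $-[2(n+p)(n+p-1)+(\delta+\delta')(n+p)]=-(n+p)[2(n+p-1)+\delta+\delta']=-a_{n+p}$, using the identity $2n(n-1)+4np+2p(p-1)=2(n+p)(n+p-1)$. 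The only genuine obstacle is bookkeeping accuracy in this expansion; once the identity for $Lh_{n,p}$ is in hand, the ODE (\ref{dmnp}) follows at once from (\ref{dt}) and the linearity of $P_t$.
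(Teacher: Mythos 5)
Your proof is correct and follows essentially the same route as the paper: apply Lemma \ref{L} to reduce the proposition to the algebraic identity $Lh_{n,p}=-a_{n+p}h_{n,p}+c_n(\delta')h_{n-1,p}+c_p(\delta)h_{n,p-1}$, and establish that identity by the same rewritings $x^{n-1}(1-x)^{p+1}=h_{n-1,p}-h_{n,p}$ and $x^{n+1}(1-x)^{p-1}=h_{n,p-1}-h_{n,p}$. Your coefficient bookkeeping checks out (the paper merely treats the cases $p=0$ and $n=0$ first and then says ``grouping terms''), so there is nothing to add.
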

\begin{proof} In view of (\ref{dt}), it is enough to prove that
\begin{equation} \label{lhnp}
Lh_{n,p} = - a_{n+p} h_{n,p} + c_n(\delta') h_{n-1,p} + c_p(\delta) h_{n,p-1},
\end{equation}
where $L$ is defined in Lemma \ref{L}. To make the proof clear, let us
start with computing $Lh_{n,0}$. We have immediately:
$$
Lh_{n,0}(x)= - a_n h_{n,0}(x) + c_n(\delta') h_{n-1,0}(x).
$$
Now, since $y(t)=1-x(t)$ satifies 
\begin{equation}\label{yt}
dy(t)= [-\delta' y(t) +\delta(1-y(t))]dt + 2 (y(t)(1-y(t))^{1/2} dW_t,
\end{equation}
we obtain $Lh_{0,p}$ by simply interchanging $\delta$ and $\delta'$
and get
$$
Lh_{0,p}(x)= - a_p h_{0,p}(x) + c_p(\delta) h_{0,p-1}(x).
$$
Finally, to compute $Lh_{n,p}$, we use the
following tricks: Each time
$x^{n+1}$ appears, we write $x^{n+1}= -(1-x-1)x^{n}= -(1-x)x^{n} +
x^{n}$; each time $(1-x)^{p+1}$ appears, we write
$(1-x)^{p+1}=(1-x)^{p} (1-x)=(1-x)^{p}- x(1-x)^{p}$. Grouping terms,
we get (\ref{lhnp}). 
\end{proof}
Our aim is now to prove that
\begin{equation}\label{final}
m_{n,p}(t,.)= \exp{(-a_{n+p} t)} h_{n,p}(.) + \sum_{0 \le k \le n, 0 \le l \le p,(k,l)\neq(0,0)}B_{n-k,p-l}^{n,p}(t) h_{n-k,p-l}(.),
\end{equation}
where, for all $(k,l),(n,p)$, $B_{n-k,p-l}^{n,p}(t) \ge 0$ for all $t \ge 0$.  Moreover, we give below the precise formula for these coefficients. Hence, the set $\Lambda_{n,p}$ of (C3) is equal to $\{(k,l), 0 \le k \le n, 0 \le l \le p\}$.  The first term can also be denoted by
$$
B_{n,p}^{n,p}(t)= \exp{(-a_{n+p} t)}.
$$
It has a special role because it is immediately obtained by (\ref{dmnp}).

\subsubsection{Computation of $m_{n,0}(t,.)$ and $m_{0,n}(t,.)$.}
Recall notation (\ref{hij}) and that $m_{n,0}(t,.)=P_t h_{n,0}(.)$. We prove now that (\ref{final}) holds for all $(n,0)$ and all $(0,n)$.
\begin{theo}\label{mn} The following holds:
\begin{equation}\label{mn0}
m_{n,0}(t,.)=\exp{(-a_n t)} h_{n,0}(.) + \sum_{k=1}^{n}B_{n-k,0}^{n,0}(t) h_{n-k,0}(.),
\end{equation}
where, for all $(k,n)$, with $1 \le k \le n$, $B_{n-k,0}^{n,0}(t) \ge 0$ for all $t \ge 0$. Moreover, for $k=1, \ldots,n$,
\begin{equation}\label{bkn}
B_{n-k,0}^{n,0}(t)=c_n(\delta') c_{n-1}(\delta')\ldots c_{n-k+1}(\delta') B_t(a_n, a_{n-1}, \ldots, a_{n-k}),
\end{equation}
where
\begin{equation}\label{btank}
B_t(a_n, a_{n-1}, \ldots, a_{n-k})=(-1)^{k}\sum_{j=0}^{k} \exp{(-a_{n-j} t)} \frac{(-1)^{j}}{\prod_{0 \le l \le k, l \neq j}|a_{n-j}-a_{n-l}|}.
\end{equation}
We can also set 
\begin{equation}\nonumber
B_{n,0}^{n,0}(t)= \exp{(-a_n t)}.
\end{equation}
Analogously:
\begin{equation}\label{m0n}
m_{0,n}(t,.)=\exp{(-a_n t)} h_{0,n}(.) + \sum_{k=1}^{n}B_{0,n-k}^{0,n}(t) h_{0,n-k}(.),
\end{equation}
where, for all $(k,n)$, with $1 \le k \le n$, $B_{0,n-k}^{0,n}(t) \ge 0$ for all $t \ge 0$. Moreover, for $k=1, \ldots,n$,
\begin{equation}\nonumber
B_{0,n-k}^{0,n}(t)=c_n(\delta) c_{n-1}(\delta)\ldots c_{n-k+1}(\delta) B_t(a_n, a_{n-1}, \ldots, a_{n-k}),
\end{equation}
We also set
\begin{equation}\nonumber
B_{0,n}^{0,n}(t)= \exp{(-a_n t)}.
\end{equation}
\end{theo}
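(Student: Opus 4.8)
The strategy is to solve the differential recursion for $m_{n,0}(t,\cdot)$ by induction on $n$, using the fact that it decouples in the one-variable case: since $h_{n,0}(x)=x^n$, the recursion (\ref{dmnp}) with $p=0$ reads
\begin{equation}\nonumber
\frac{d}{dt}m_{n,0}(t,\cdot)=-a_n\,m_{n,0}(t,\cdot)+c_n(\delta')\,m_{n-1,0}(t,\cdot),\qquad m_{0,0}\equiv 1 .
\end{equation}
This is a first-order linear ODE in $t$ for each fixed $n$, whose inhomogeneous term is $c_n(\delta')\,m_{n-1,0}(t,\cdot)$, already known by the inductive hypothesis to be a nonnegative combination of $h_{k,0}$, $0\le k\le n-1$. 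The plan is: (i) verify the base cases $n=0$ (trivial, $m_{0,0}\equiv 1=h_{0,0}$) and $n=1$ (a single exponential integral, giving $B_0(a_1,a_0)$); (ii) assume (\ref{mn0})–(\ref{btank}) for $n-1$; (iii) plug into the ODE, solve by variation of constants with integrating factor $e^{a_n t}$, and match coefficients of each $h_{k,0}$; (iv) read off that $B^{n,0}_{n-k,0}(t)$ satisfies $B^{n,0}_{n-k,0}=c_n(\delta')\int_0^t e^{-a_n(t-s)}B^{n-1,0}_{n-k,0}(s)\,ds$, which unwinds to the product $c_n(\delta')\cdots c_{n-k+1}(\delta')$ times an iterated integral of exponentials. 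The latter iterated integral is exactly the divided-difference-type quantity $B_t(a_n,\dots,a_{n-k})$; one checks by induction (or by the standard identity for divided differences of the exponential) that
\begin{equation}\nonumber
\int_0^t e^{-a_n(t-s)}B_s(a_{n-1},\dots,a_{n-k})\,ds = B_t(a_n,a_{n-1},\dots,a_{n-k}),
\end{equation}
and that $B_t(a_n,\dots,a_{n-k})=\sum_{j=0}^k e^{-a_{n-j}t}\big/\prod_{l\ne j}(a_{n-l}-a_{n-j})$, which rearranges into the signed form (\ref{btank}) once one observes that for our $a_n=n[2(n-1)+\delta+\delta']$ the sequence $a_0<a_1<\dots$ is strictly increasing (because $\delta,\delta'\ge 2$), so $a_{n-j}-a_{n-l}$ has sign $(-1)^{?}$ determined by whether $j<l$, and $\prod_{l\ne j}|a_{n-j}-a_{n-l}| = (-1)^{j}\prod_{l\ne j}(a_{n-j}-a_{n-l})$ up to the global $(-1)^k$.

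**Nonnegativity.**
The one genuinely delicate point is proving $B^{n,0}_{n-k,0}(t)\ge 0$ for all $t\ge 0$, since the closed form (\ref{btank}) is an alternating sum and positivity is not visible term-by-term. Here I would avoid fighting the explicit formula and instead argue directly from the integral recursion: since $c_n(\delta')\ge 0$ (as $\delta'\ge 2$) and, by the inductive hypothesis, $B^{n-1,0}_{n-k,0}(s)\ge 0$ for all $s\ge 0$, the identity $B^{n,0}_{n-k,0}(t)=c_n(\delta')\int_0^t e^{-a_n(t-s)}B^{n-1,0}_{n-k,0}(s)\,ds$ immediately gives nonnegativity, and the base case $B^{1,0}_{0,0}(t)=c_1(\delta')\,(e^{-a_0 t}-e^{-a_1 t})/(a_1-a_0)\ge 0$ is clear. (Alternatively, and more conceptually, $P_t$ is a positivity-preserving operator and $h_{n,0}\ge 0$, so $m_{n,0}(t,\cdot)=P_th_{n,0}\ge 0$; but this only bounds the whole sum, not each coefficient, so the inductive integral argument is the clean route to the per-coefficient claim.)

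**The $m_{0,n}$ case and wrap-up.**
For (\ref{m0n}) there is nothing new to do: as noted in the proof of Proposition \ref{mnp}, the process $y(t)=1-x(t)$ is again a Wright–Fisher diffusion with $\delta,\delta'$ interchanged, and $h_{0,n}(x)=(1-x)^n$ corresponds to $y^n$ under this substitution; hence applying the already-proven formula (\ref{mn0}) to the $y$-diffusion and translating back gives (\ref{m0n}) verbatim with every $c_k(\delta')$ replaced by $c_k(\delta)$, while the $a_k$'s are unchanged because $a_n$ is symmetric in $\delta,\delta'$. Finally, I would record the convention $B^{n,0}_{n,0}(t)=e^{-a_n t}$ (the $k=0$ term, read off directly from the ODE before any integration) so that the statement is uniform over $0\le k\le n$. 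The main obstacle, as flagged, is organizing the divided-difference bookkeeping cleanly — matching the iterated exponential integrals to the symmetric expression (\ref{btank}) and getting the signs right — rather than any conceptual difficulty; the ODE itself is elementary once the induction is set up.
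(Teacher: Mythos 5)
Your proposal is correct and follows essentially the same route as the paper: the same scalar ODE $m'_n=-a_nm_n+c_n(\delta')m_{n-1}$, the same induction producing the integral recursion $B_{n-(k+1)}^{n}(t)=c_n(\delta')e^{-a_nt}\int_0^te^{a_ns}B_{n-1-k}^{n-1}(s)\,ds$ (from which nonnegativity is read off exactly as you do), and the same $\delta\leftrightarrow\delta'$ symmetry via $y(t)=1-x(t)$ for the $m_{0,n}$ case. The one step you leave unexecuted --- matching the iterated exponential integrals to the closed form (\ref{btank}) --- is precisely where the paper does its real work, via Lemma \ref{lemA} (a partial-fraction identity proved by polynomial interpolation in the Appendix); your appeal to the standard divided-difference identity for $\lambda\mapsto e^{-\lambda t}$ is a legitimate substitute, since the $a_n$ are strictly increasing and hence distinct, but to be complete you would still need to verify that identity or cite it precisely, which amounts to the same computation.
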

\begin{proof}For the proof, let us fix $x$ and set $m_{n,0}(t,x)=m_n(t)$. We also set $B_{n-k,0}^{n,0}(t)=B_{n-k}^{n}(t)$ during this proof. Solving $m{'}_n(t)=- a_n m_n(t) + c_n(\delta') m_{n-1}(t), m_n(0)=x^{n}=h_{n,0}(x)$ yields
\begin{equation}\label{eqn}
m_n(t)=\exp{(-a_n t)}\;x^{n} + \exp{(-a_n t)}\int_0^{t}\exp{(a_n s)}m_{n-1}(s)ds.
\end{equation}
Let us first prove by induction that
\begin{equation}\label{induc}
m_n(t)= \sum_{k=0}^{n}B_{n-k}^{n}(t) x^{n-k},
\end{equation}
where $B_{n-k}^{n}(t) \ge 0$ for all $t \ge 0$ and all $k=0, \ldots,n$ and $B_n^{n}(t)=\exp{(-a_n t)}$. For $n=0$, $m_0(t)=1$. For $n=1$, we solve (\ref{eqn}) and get
\begin{equation}\nonumber
m_1(t)=\exp{(-a_1 t)} x + c_1(\delta')\frac{(1-\exp{(-a_1 t)})}{a_1}.
\end{equation}
So, (\ref{induc}) holds for $n=1$ with
\begin{equation}\label{b0}
B_1^{1}(t)= \exp{(-a_1 t)} , \quad B_0^{1}(t)= c_1(\delta')\frac{(1-\exp{(-a_1 t)})}{a_1}\ge 0.
\end{equation}
Suppose (\ref{induc}) holds for $n-1$. We now apply (\ref{eqn}). Identifying the coefficients of $x^{n-k}$, $0 \le k \le n$, we get:
$$
B_n^{n}(t)=\exp{(-a_n t)},
$$
and for $k=0,1, \ldots n-1$,
\begin{equation}\label{inducB}
B_{n-(k+1)}^{n}(t)= c_n(\delta') \exp{(-a_n t)}\int_0^{t} \exp{(a_n s)} B_{n-1-k}^{n-1}(s) ds.
\end{equation}
Hence, (\ref{induc}) holds for all $n \ge 0$ with all coefficients non negative. 

Now, we prove (\ref{bkn})-(\ref{btank}) by induction using (\ref{inducB}). For $n=1$, we look at (\ref{b0}) and see that, since $a_0=0$,
\begin{equation} \nonumber
B_0^{1}(t) =c_1(\delta') (-1) [\frac{\exp{(-a_1 t)}}{a_1- a_0}+\frac{(-1) \exp{(-a_0 t)}}{|a_0- a_1|}]= c_1(\delta') B_t(a_1, a_0).
\end{equation}
Now, suppose we have formulae (\ref{bkn})-(\ref{btank}) for $n-1$ and $k=0,1, \ldots, n-1$. We know that $B_n^n(t)= \exp{(-a_n t)}$. Let us compute, for $k=0, 1, \ldots,n-1$, $B_{n-(k+1)}^{n}(t)$ using (\ref{inducB}). We have:
\begin{equation}\label{formule1}
B_{n-(k+1)}^{n}(t)= c_n(\delta')c_{n-1}(\delta') \ldots c_{n-k}(\delta') (-1)^{k} \times B,
\end{equation}
with
\begin{equation}\label{formule2}
B=\sum_{j=0}^{k} \exp{(-a_n t)}\int_0^{t} \exp{((a_n-a_{n-1-j}) s)}ds \frac{(-1)^{j}}{\prod_{0 \le l \le k, l \neq j}|a_{n-1-j}-a_{n-1-l}|}.
\end{equation}
Integrating, we get:
\begin{equation}\nonumber
B=\sum_{j=0}^{k}  \exp{(-a_{n-1-j}t)} \frac{(-1)^{j}}{(a_n - a_{n-1-j}) \prod_{0 \le l \le k, l \neq j}|a_{n-1-j}-a_{n-1-l}|} + (- \exp{(-a_n t)}) A,
\end{equation}
with 
\begin{equation}\label{formule4}
A= \sum_{j=0}^{k} \frac{(-1)^{j}}{(a_n - a_{n-1-j}) \prod_{0 \le l \le k, l \neq j}|a_{n-1-j}-a_{n-1-l}|}.
\end{equation}
Hence,
\begin{equation}\label{formule5}
B=\sum_{j'=1}^{k+1}  \exp{(-a_{n-j'}t)} \frac{(-1)^{j'-1}}{ \prod_{0 \le l' \le k+1, l' \neq j'}|a_{n-j'}-a_{n-l'}|} + (- \exp{(-a_n t)}) A.
\end{equation}
In view of
(\ref{bkn})-(\ref{btank})-(\ref{formule1})-(\ref{formule4})-(\ref{formule5}),
to complete the proof of (\ref{mn0}), it remains to show the following equality:
\begin{lemma} \label{lemA}
\begin{equation} \nonumber
 \sum_{j=0}^{k} \frac{(-1)^{j}}{(a_n - a_{n-1-j}) \prod_{0 \le l \le k, l \neq j}|a_{n-1-j}-a_{n-1-l}|}= \frac{1}{(a_n - a_{n-1})(a_n - a_{n-2})\ldots (a_{n}-a_{n-k-1})}.
\end{equation}
\end{lemma}
This lemma requires some algebra and its proof is postponed to the
Appendix. At last, to get (\ref{m0n}), we just interchange $\delta'$ and $\delta$ in all formulae because of (\ref{yt}).
\end{proof}
\subsubsection{Computation of $m_{n,p}(t,.)$.}
Recall that $h_{n,p}(x)= x^{n}(1-x)^{p}$ and $m_{n,p}(t,.)=P_t h_{n,p}(.)$. Now, we focus  on formula (\ref{dmnp}). It is easy to see that, since we have computed $m_{n,0}(t,.)$ for all $n$ and $m_{0,p}(t,.)$ for all $p$, then, we deduce from (\ref{dmnp}) $m_{n,p}(t,.)$ for all $(n,p)$. This is done as  follows. Suppose we have computed all terms $m_{i,j-i}(t,.)$ for $0 \le i \le j \le n$, then, we obtain all terms $m_{i,j-i}(t,.)$ for $0 \le i \le j \le n+1$. Indeed, the extra terms are:
\begin{itemize}
\item $m_{n+1,0}(t,.)$ that we know already,
%\item $m_{n,1}(t,.)$ that is obtained by
%\begin{equation}\nonumber
%\frac{d}{dt}m_{n,1}(t,.)= -a_{n+1}m_{n,1}(t,.) + c_n(\delta') m_{n-1,1}(t,.) + c_1(\delta) m_{n,0}(t,.), 
%\end{equation}
\item $m_{i,n+1-i}(t,.)$ for $0<i<n+1$ that is calculated from
\begin{equation}\nonumber
\frac{d}{dt}m_{i,n+1-i}(t,.)= -a_{n+1}m_{i,n+1-i}(t,.) + c_i(\delta') m_{i-1,n+1-i}(t,.) + c_{n+1-i}(\delta) m_{i,n+1-i-1}(t,.), 
\end{equation}
\item at last, $m_{0,n+1}(t,.)$ that we know already.
\end{itemize}
This is exactly filling in a matrix composed of the terms $m_{n,p}(t,.)$. Having the first line $m_{0,n}(t,.)$ and the first column $m_{n,0}(t,.)$, we get each new term $m_{i,j}(t,.)$ from the one above ($m_{i-1,j}(t,.)$) and the one on the left ($m_{i,j-1}(t,.)$).

Now, we proceed to get formula (\ref{final}).
\begin{theo}\label{miji} For all $(i,j)$ such that $0 \le i \le j\le n$, 
\begin{equation}\label{final2}
m_{i,j-i}(t,.)= \exp{(-a_{j} t)} h_{i,j-i}(.) + \sum_{0 \le k \le i, 0 \le l \le j-i,(k,l)\neq(0,0)}B_{i-k,j-i-l}^{i,j-i}(t) h_{i-k,j-i-l}(.),
\end{equation}
with 
\begin{equation}\nonumber
B_{i-k,j-i-l}^{i,j-i}(t)= \binom{k+l}{k}c_i(\delta') \ldots c_{i-k+1}(\delta') c_{j-i}(\delta) \ldots c_{j-i-l+1}(\delta) B_t(a_j,a_{j-1}, \ldots, a_{j-(k+l)}),
\end{equation}
with the convention that, for $k=0$, there is no term in $c_.(\delta')$ and for $l=0$, there is no term in $c_.(\delta)$).
\end{theo}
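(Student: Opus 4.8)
The plan is to prove (\ref{final2}) by induction on the anti\-diagonal index $j$ (the common value of the two indices summed), using the triangular recursion (\ref{dmnp}) exactly along the lines of the matrix\-filling description preceding the statement. For fixed $j$, the two boundary entries $m_{j,0}(t,.)$ and $m_{0,j}(t,.)$ are already furnished by Theorem \ref{mn}, and one checks directly that (\ref{mn0}) and (\ref{m0n}) are the cases $i=j$ (no $\delta$\-factors, $\binom{k}{k}=1$) and $i=0$ (no $\delta'$\-factors) of (\ref{final2}); the case $j=0$ is $m_{0,0}(t,.)=1$. Hence it suffices to treat $0<i<j$ assuming (\ref{final2}) known on the anti\-diagonal $j-1$, which is precisely where the two terms $m_{i-1,j-i}$ and $m_{i,j-i-1}$ live.

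For such $(i,j)$, apply the integrating factor $e^{a_j t}$ to (\ref{dmnp}) with $n=i$, $p=j-i$ (so $n+p=j$):
\begin{equation}\nonumber
m_{i,j-i}(t,.)=e^{-a_j t}h_{i,j-i}(.)+e^{-a_j t}\int_0^{t}e^{a_j s}\bigl[c_i(\delta')\,m_{i-1,j-i}(s,.)+c_{j-i}(\delta)\,m_{i,j-i-1}(s,.)\bigr]\,ds,
\end{equation}
whose first term is the leading term of (\ref{final2}). Substituting the induction hypothesis for the two lower\-diagonal terms and collecting the coefficient of a generic monomial $h_{i-k,j-i-l}(.)$ with $(k,l)\neq(0,0)$: such a monomial is produced by the $h_{(i-1)-(k-1),\,(j-i)-l}$\-term of $m_{i-1,j-i}$ (needs $k\ge1$) and by the $h_{i-k,\,(j-i-1)-(l-1)}$\-term of $m_{i,j-i-1}$ (needs $l\ge1$); multiplying by the extra $c_i(\delta')$, resp. $c_{j-i}(\delta)$, telescopes the products of $c$'s in both contributions into the same common factor $c_i(\delta')\cdots c_{i-k+1}(\delta')\,c_{j-i}(\delta)\cdots c_{j-i-l+1}(\delta)$, while the $s$\-dependent part left under the integral is, in both cases, $B_s(a_{j-1},a_{j-2},\ldots,a_{j-(k+l)})$. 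Using
\begin{equation}\nonumber
e^{-a_j t}\int_0^{t}e^{a_j s}\,B_s(a_{j-1},\ldots,a_{j-(k+l)})\,ds=B_t(a_j,a_{j-1},\ldots,a_{j-(k+l)}),
\end{equation}
which is the algebraic identity already established in the proof of Theorem \ref{mn} via (\ref{inducB})--(\ref{formule5}) and Lemma \ref{lemA} — a statement about consecutive members of the strictly increasing sequence $(a_n)$, hence applicable verbatim — the coefficient of $h_{i-k,j-i-l}(.)$ becomes $\bigl[\binom{k+l-1}{k-1}+\binom{k+l-1}{k}\bigr]$ times that common $c$\-product times $B_t(a_j,\ldots,a_{j-(k+l)})$, and Pascal's rule $\binom{k+l-1}{k-1}+\binom{k+l-1}{k}=\binom{k+l}{k}$ (with the usual conventions when $k=0$ or $l=0$, in which case only one of the two contributions survives) delivers exactly the asserted $B_{i-k,j-i-l}^{i,j-i}(t)$.

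Nonnegativity is then automatic: the coefficients $c_n(\delta),c_n(\delta')$ are $\ge0$ by Proposition \ref{mnp}, and $B_t(a_j,\ldots,a_{j-m})\ge0$ follows from the same integral recursion by induction on $m$, starting from $B_t(a_j)=e^{-a_j t}>0$. The only genuinely delicate point is the index bookkeeping in the substitution step: one must track which monomial of each of $m_{i-1,j-i}$ and $m_{i,j-i-1}$ feeds into $h_{i-k,j-i-l}$, and verify that the two contributions carry identical $c$\-products and identical $B_s(\cdots)$ factors so that they add cleanly and Pascal's rule applies; everything else (the ODE integration, the telescoping, the identification of the integral with $B_t$) is mechanical, and no analytic input beyond Lemma \ref{L}, Proposition \ref{mnp}, Theorem \ref{mn} and Lemma \ref{lemA} is required.
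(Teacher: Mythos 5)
Your proposal is correct and follows essentially the same route as the paper: induction along the antidiagonal via the recursion (\ref{dmnp}), substitution of the lower-diagonal expansions into the integrated ODE, the identity $e^{-a_j t}\int_0^{t}e^{a_j s}B_s(a_{j-1},\ldots,a_{j-(k+l)})\,ds=B_t(a_j,\ldots,a_{j-(k+l)})$ (the paper's (\ref{inducbtank})), and Pascal's rule to merge the two contributions into $\binom{k+l}{k}$. The index bookkeeping ($k'=k-1,\ l'=l$ and $k''=k,\ l''=l-1$) and the nonnegativity argument also match the paper's treatment.
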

\begin{proof} By (\ref{bkn})-(\ref{btank})-(\ref{inducB}), we have proved that
\begin{equation}\label{inducbtank}
B_t(a_{n+1}, a_n, \ldots, a_{n-k})= \exp{(-a_{n+1}t)} \int_0^{t} \exp{(a_{n+1}s)} B_s(a_n, a_{n-1}, \ldots, a_{n-k}) ds.
\end{equation}
Suppose (\ref{final2}) holds for $0 \le i \le j \le n$. Let us compute the extra terms $m_{i,n+1-i}(t,.)$ for $0<i<n+1$ using their differential equations. We have
\begin{equation}\label{etape0}
m_{i,n+1-i}(t,.)= \exp{(-a_{n+1}t)} h_{i,n+1-i}(.) + A_i(\delta') + B_i(\delta),
\end{equation}
with
\begin{equation}\label{etape1}
A_i(\delta')= \exp{(-a_{n+1}t)} \int_0^{t} \exp{(a_{n+1}s)}\;c_i(\delta') m_{i-1,n+1-i}(s,.)ds,
\end{equation}
\begin{equation}\label{etape2}
B_i(\delta)= \exp{(-a_{n+1}t)} \int_0^{t} \exp{(a_{n+1}s)}\;c_{n+1-i}(\delta) m_{i,n+1-i-1}(s,.)ds.
\end{equation}
We apply the induction formula and replace $m_{i-1,n+1-i}(s,.), m_{i,n+1-i-1}(s,.)$ by their  development. This yields:
\begin{eqnarray*}
m_{i-1,n+1-i}(s,.)&=& \exp{(-a_{n} s)} h_{i-1,n+1-i}(.) \\
&+& \sum_{0 \le k' \le i-1, 0 \le l' \le n+1-i,(k',l')\neq(0,0)}B_{i-1-k',n+1-i-l'}^{i-1,n+1-i}(s) h_{i-1-k',n+1-i-l'}(.),
\end{eqnarray*}
\begin{eqnarray*}
m_{i,n+1-i-1}(s,.)&=& \exp{(-a_{n} s)} h_{i,n+1-i-1}(.) \\
&+ &\sum_{0 \le k'' \le i, 0 \le l'' \le n+1-i-1,(k'',l'')\neq(0,0)}B_{i-k'',n+1-i-1-l''}^{i,n+1-i-1}(s) h_{i-k'',n+1-i-1-l''}(.).
\end{eqnarray*}
In (\ref{etape0})-(\ref{etape1})-(\ref{etape2}), the coefficient of  $h_{i-1,n+1-i}(.)$ obtained by the above relations only comes from:
\begin{equation}\nonumber
c_i(\delta') \exp{(-a_{n+1}t)} \int_0^{t} \exp{(a_{n+1}s)} \exp{(- a_n s)}ds.
\end{equation}
By (\ref{inducbtank}), this term is equal to:
\begin{equation}\label{final211}
c_i(\delta') B_t(a_{n+1}, a_n) = B_{i-1, n+1-i-0}^{i,n+1-i} = \binom{1}{1}c_i(\delta') B_t(a_{n+1},a_n).
\end{equation}
Analogously, the coefficient of $h_{i,n+1-i-1}(.)$ comes from:
\begin{equation}\nonumber
c_{n+1-i}(\delta) \exp{(-a_{n+1}t)} \int_0^{t} \exp{(a_{n+1}s)} \exp{(- a_n s)}ds.
\end{equation}
This term is equal to:
\begin{equation}\label{final212}
c_{n+1-i}(\delta) B_t(a_{n+1}, a_n) = B_{i, n+1-i-1}^{i,n+1-i} = \binom{1}{0}c_{n+1-i}(\delta) B_t(a_{n+1},a_n).
\end{equation}
Now, the coefficient of the current term $h_{i-k,n+1-i-l}(.)$ comes from the sum of the following two terms:
\begin{equation}\label{(i)}
b_{1}=c_i(\delta') \exp{(-a_{n+1}t)} \int_0^{t} \exp{(a_{n+1}s)} B_{i-1-(k-1),n+1-i-l}^{i-1,n+1-i}(s)ds,
\end{equation}
($i-1-k'=i-k, n+1-i-l'=n+1-i-l$, thus $k'=k-1, l'=l$)
and
\begin{equation}\label{(ii)}
b_{2}=c_{n+1-i}(\delta) \exp{(-a_{n+1}t)} \int_0^{t} \exp{(a_{n+1}s)} B_{i-k,n+1-i-l}^{i,n+1-i-1}(s)ds, 
\end{equation}
($i-k''=i-k, n+1-i-1-l''=n+1-i-l$, thus $k''=k, l''=l-1$). Thus,
\begin{equation}\nonumber
\hspace{-2cm} b_{1}= c_i(\delta') \exp{(-a_{n+1}t)} \int_0^{t} \exp{(a_{n+1}s)} 
 \binom{k+l-1}{k-1}c_{i-1}(\delta') \ldots c_{i-1-(k-1)+1}(\delta') \times
\end{equation}
\begin{equation}\nonumber
 c_{n+1-i}(\delta)\ldots c_{n+1-i-l+1}(\delta) 
  B_s(a_n,a_{n-1},\ldots, a_{n-(k+l-1)}) ds
\end{equation}
\begin{equation}\nonumber
\hspace{-4cm} =\binom{k+l-1}{k-1}c_i(\delta') c_{i-1}(\delta') \ldots c_{i-k+1}(\delta') c_{n+1-i}(\delta)\ldots c_{n+1-i-l+1}(\delta) 
\end{equation}
\begin{equation}\nonumber
\times B_t(a_{n+1},a_n,a_{n-1},\ldots, a_{n+1-(k+l)}) .
\end{equation}
And 
\begin{equation}\nonumber
\hspace{-2cm} b_{2}= c_{n+1-i}(\delta) \exp{(-a_{n+1}t)} \int_0^{t} \exp{(a_{n+1}s)} 
 \binom{k+l-1}{k}c_{i}(\delta') \ldots c_{i-k+1}(\delta') \times
\end{equation}
\begin{equation}\nonumber
 c_{n+1-(i+1)}(\delta)\ldots c_{n+1-i-l+1}(\delta) 
  B_s(a_n,a_{n-1},\ldots, a_{n-(k+l-1)}) ds
\end{equation}
\begin{equation}\nonumber
\hspace{-4cm} =\binom{k+l-1}{k}c_i(\delta') c_{i-1}(\delta') \ldots c_{i-k+1}(\delta') c_{n+1-i}(\delta)\ldots c_{n+1-i-l+1}(\delta) 
\end{equation}
\begin{equation}\nonumber
\times B_t(a_{n+1},a_n,a_{n-1},\ldots, a_{n+1-(k+l)}) .
\end{equation}
Now using that $\binom{k+l-1}{k-1}+\binom{k+l-1}{k}=\binom{k+l}{k}$, we finally obtain that  $b_1 + b_2 $ is exactly equal to the expected term $B_{i-k,n+1-i-l}^{i,n+1-i}(t)$.
\end{proof}

\subsubsection{Back to the mixture coefficients.}
Now, we use Proposition \ref{pth} and formula (\ref{alphaij})  to obtain the mixture coefficients for $\nu_{i,j-i}P_t$ We begin with a lemma.
\begin{lemma}\label{b} For $0 \le i \le n$ and $0 \le j \le p$, we
  have
%\begin{equation}\label{bb0}
%\frac{B(i+\frac{\delta'}{2},j+\frac{\delta}{2})}{B(n+\frac{\delta'}{2},p+\frac{\delta}{2})}=
%\frac{(n+p-1 +
%  \frac{\delta'+\delta}{2})(n+p-2 +
%  \frac{\delta'+\delta}{2})\ldots
%  (i+j + \frac{\delta'+\delta}{2})}{(n-1 +\frac{\delta'}{2})(p-1
%  +\frac{\delta}{2})(n-2 +\frac{\delta'}{2})(p-2
%  +\frac{\delta}{2})\ldots(i +\frac{\delta'}{2})(j +\frac{\delta}{2})}.
%\end{equation}
%And:
\begin{equation}\label{bb}
\frac{B(i+\frac{\delta'}{2},j+\frac{\delta}{2})}{B(n+\frac{\delta'}{2},p+\frac{\delta}{2})}= \frac{\binom{i+j}{j}a_{n+p}a_{n+p-1} \ldots a_{i+1+p}a_{i+p}a_{i+p-1}\ldots  a_{i+j+1}}{\binom{n+p}{p}c_n(\delta') c_{n-1}(\delta')\ldots c_{i+1}(\delta')c_p(\delta)c_{p-1}(\delta)\ldots c_{j+1}(\delta)}.
\end{equation}
In the trivial case $(i,j)=(n,p)$, the quotient is equal to $1$. In (\ref{bb}), for
$j=p$, there is no term in  $c_.(\delta)$, and for $i=n$, there is no term in $c_.(\delta')$.
\end{lemma}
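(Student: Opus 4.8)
The plan is to expand both Beta functions through the classical identity $B(a,b)=\Gamma(a)\Gamma(b)/\Gamma(a+b)$, reduce the quotient to a ratio of finite products of arithmetic progressions by telescoping with $\Gamma(a+1)=a\Gamma(a)$, and then recognize those progressions as products of the quantities $a_m$ and $c_m(\cdot)$ of (\ref{ancn}). The bridge is the set of elementary identities
\begin{equation}\nonumber
a_m=2m\bigl(m-1+\tfrac{\delta+\delta'}{2}\bigr),\qquad c_m(\delta')=2m\bigl(m-1+\tfrac{\delta'}{2}\bigr),\qquad c_m(\delta)=2m\bigl(m-1+\tfrac{\delta}{2}\bigr),
\end{equation}
read off directly from the definitions in (\ref{ancn}).

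First I would write
\begin{equation}\nonumber
\frac{B(i+\frac{\delta'}{2},j+\frac{\delta}{2})}{B(n+\frac{\delta'}{2},p+\frac{\delta}{2})}=\frac{\Gamma(i+\frac{\delta'}{2})}{\Gamma(n+\frac{\delta'}{2})}\cdot\frac{\Gamma(j+\frac{\delta}{2})}{\Gamma(p+\frac{\delta}{2})}\cdot\frac{\Gamma(n+p+\frac{\delta+\delta'}{2})}{\Gamma(i+j+\frac{\delta+\delta'}{2})}.
\end{equation}
Since $i\le n$, $j\le p$ and hence $i+j\le n+p$, repeated use of $\Gamma(a+1)=a\Gamma(a)$ turns each of the three factors into a finite product, e.g.
\begin{equation}\nonumber
\frac{\Gamma(i+\frac{\delta'}{2})}{\Gamma(n+\frac{\delta'}{2})}=\frac{1}{\prod_{m=i+1}^{n}(m-1+\frac{\delta'}{2})},
\end{equation}
and likewise for the other two ratios, with the convention that an empty product equals $1$; this convention also covers at once the cases $i=n$, $j=p$ and the trivial case $(i,j)=(n,p)$.

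Then I would substitute the three identities above, replacing each factor $m-1+\frac{\delta'}{2}$ by $c_m(\delta')/(2m)$, each $m-1+\frac{\delta}{2}$ by $c_m(\delta)/(2m)$, and each $m-1+\frac{\delta+\delta'}{2}$ by $a_m/(2m)$. The residual factors of $m$ collect into factorials, namely $\prod_{m=i+1}^{n}m=n!/i!$ and similarly $p!/j!$ and $(n+p)!/(i+j)!$, while the factors of $2$ produce the powers $2^{n-i}$, $2^{p-j}$ and $2^{n+p-i-j}$. The two key bookkeeping observations are that $n+p-i-j=(n-i)+(p-j)$, so the powers of $2$ cancel between numerator and denominator, and that the surviving factorials combine into
\begin{equation}\nonumber
\frac{(i+j)!\,n!\,p!}{(n+p)!\,i!\,j!}=\frac{\binom{i+j}{j}}{\binom{n+p}{p}}.
\end{equation}
What is left is exactly $\prod_{m=i+j+1}^{n+p}a_m$ in the numerator and $\prod_{m=i+1}^{n}c_m(\delta')\,\prod_{m=j+1}^{p}c_m(\delta)$ in the denominator, which is precisely the right-hand side of (\ref{bb}).

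There is no substantial obstacle here: once the identities linking $a_m$ and $c_m(\cdot)$ to the arithmetic progressions are noted, everything is a routine computation. The only points requiring a little care are the index shift $k\mapsto m=k+1$ in passing from the $\Gamma$-telescopes to the products of $a_m$ and $c_m(\cdot)$, and checking that the numbers of factors $2$ (and of factorial terms) match on the two sides; both follow immediately from $i\le n$ and $j\le p$.
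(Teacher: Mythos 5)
Your proof is correct and follows essentially the same route as the paper: both telescope the ratio of Beta functions and then substitute the identities $\frac{a_m}{2m}=m-1+\frac{\delta+\delta'}{2}$, $\frac{c_m(\delta')}{2m}=m-1+\frac{\delta'}{2}$, $\frac{c_m(\delta)}{2m}=m-1+\frac{\delta}{2}$, with the powers of $2$ cancelling and the factorials assembling into $\binom{i+j}{j}/\binom{n+p}{p}$. The only cosmetic difference is that you telescope the three Gamma-function ratios directly, whereas the paper iterates the recursion $B(a+1,b+1)=\frac{ab}{(a+b+1)(a+b)}B(a,b)$; your bookkeeping with independent ranges $i+1,\ldots,n$, $j+1,\ldots,p$ and $i+j+1,\ldots,n+p$ is, if anything, a little cleaner when $n-i\neq p-j$.
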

\begin{proof} We use the relation
\begin{equation}
B(a+1, b+1)= \frac{a b}{(a+b+1)(a+b)}B(a,b). 
\end{equation} 
Hence, 
\begin{equation}\nonumber
B(n+\frac{\delta'}{2},p+\frac{\delta}{2})= \frac{(n-1+\frac{\delta'}{2})(p-1+\frac{\delta}{2})}{(n+p-1 +
  \frac{\delta'+\delta}{2})(n+p-2 +
  \frac{\delta'+\delta}{2})}B(n-1+\frac{\delta'}{2},p-1+\frac{\delta}{2}).
\end{equation}
By (\ref{ancn}),
we have:
\begin{equation}\nonumber
\frac{a_n}{2n}= n-1 +  \frac{\delta'+\delta}{2},\quad
\frac{c_n(\delta')}{2n}= n-1 +  \frac{\delta'}{2},\quad \frac{c_p(\delta)}{2p}= p-1 +  \frac{\delta}{2}.
\end{equation}
Hence, 
\begin{equation} \nonumber
B(n+\frac{\delta'}{2},p+\frac{\delta}{2})= \frac{c_n(\delta')}{2n}
\frac{c_p(\delta)}{2p}\frac{2(n+p)}{a_{n+p}}
\frac{2(n+p-1)}{a_{n+p-1}}B(n-1+\frac{\delta'}{2},p-1+\frac{\delta}{2}).
\end{equation}
Iterating downwards yields:
\begin{equation}\nonumber
\frac{B(i+\frac{\delta'}{2},j+\frac{\delta}{2})}{B(n+\frac{\delta'}{2},p+\frac{\delta}{2})}=\frac{(i+j)!}{(n+p)!}\frac{n!}{i!}\frac{p!}{j!}\frac{a_{n+j}a_{n+j-1}\ldots a_{i+j+1}}{c_n(\delta')c_{n-1}(\delta')\ldots c_{i+1}(\delta')c_p(\delta)c_{p-1}(\delta)\ldots c_{j+1}(\delta)}.
\end{equation}
This gives (\ref{bb}).
\end{proof}
Now, we have the complete formula for $\nu_{i,j-i}P_t$.
\begin{prop}\label{mixiji}
For $0 \le i \le j$, we have 
\begin{equation}\nonumber
\nu_{i,j-i}P_t= \sum_{k=0,\ldots ,i, l=0,\ldots , j-i} \alpha^{i,j-i}_{i-k,j-i-l}(t) \;\nu_{i-k,j-i-l},
\end{equation}
where, for $(k,l) \neq (0,0)$,
\begin{equation}\label{alphaiji}
\alpha^{i,j-i}_{i-k,j-i-l}(t) = \frac{\binom{i}{k}\binom{j-i}{l}}{\binom{j}{k+l}} a_j a_{j-1}\ldots a_{j-k-l+1} B_t(a_j,a_{j-1}, \ldots, a_{j-k-l}).
\end{equation}
For $(k,l)=(0,0)$,
\begin{equation}\label{alpha}
\alpha^{i,j-i}_{i,j-i}(t) = \exp{(-a_j t)}.
\end{equation}
\end{prop}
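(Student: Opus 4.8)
The plan is to obtain Proposition \ref{mixiji} by merely assembling three results already established: Proposition \ref{pth}, which turns an expansion of $P_th_{n,p}$ into the mixture decomposition of $\nu_{n,p}P_t$; Theorem \ref{miji}, which gives the explicit coefficients $B^{i,j-i}_{i-k,j-i-l}(t)$; and Lemma \ref{b}, which gives the ratio of Beta normalizing constants. Concretely, I apply Proposition \ref{pth} with $(n,p)=(i,j-i)$: by Theorem \ref{miji} condition (C3) holds with $\Lambda_{i,j-i}=\{(i-k,j-i-l):0\le k\le i,\ 0\le l\le j-i\}$ and with all $B^{i,j-i}_{i-k,j-i-l}(t)\ge 0$, so Proposition \ref{pth} already gives the mixture form together with $\alpha(t)\in S_f$ (non-negativity and normalization of the weights are free). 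It therefore only remains to simplify the weight given by (\ref{alphaij}), namely
$$
\alpha^{i,j-i}_{i-k,j-i-l}(t)=B^{i,j-i}_{i-k,j-i-l}(t)\;\frac{B\bigl(i-k+\tfrac{\delta'}{2},\,j-i-l+\tfrac{\delta}{2}\bigr)}{B\bigl(i+\tfrac{\delta'}{2},\,j-i+\tfrac{\delta}{2}\bigr)}.
$$

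For the term $(k,l)=(0,0)$ this is immediate: the leading term of Theorem \ref{miji} gives $B^{i,j-i}_{i,j-i}(t)=\exp(-a_jt)$ and the Beta quotient equals $1$ (the trivial case of Lemma \ref{b}), which is exactly (\ref{alpha}). For $(k,l)\neq(0,0)$ I substitute the expression of Theorem \ref{miji},
$$
B^{i,j-i}_{i-k,j-i-l}(t)=\binom{k+l}{k}\,c_i(\delta')\cdots c_{i-k+1}(\delta')\,c_{j-i}(\delta)\cdots c_{j-i-l+1}(\delta)\,B_t(a_j,\ldots,a_{j-(k+l)}),
$$
together with Lemma \ref{b} applied with its ``$(n,p)$'' equal to $(i,j-i)$ and its ``$(i,j)$'' equal to $(i-k,j-i-l)$,
$$
\frac{B\bigl(i-k+\tfrac{\delta'}{2},\,j-i-l+\tfrac{\delta}{2}\bigr)}{B\bigl(i+\tfrac{\delta'}{2},\,j-i+\tfrac{\delta}{2}\bigr)}=\frac{\binom{j-k-l}{j-i-l}\,a_j a_{j-1}\cdots a_{j-k-l+1}}{\binom{j}{i}\,c_i(\delta')\cdots c_{i-k+1}(\delta')\,c_{j-i}(\delta)\cdots c_{j-i-l+1}(\delta)}.
$$
The two products of $c_\cdot(\delta')$'s and of $c_\cdot(\delta)$'s cancel, leaving
$$
\alpha^{i,j-i}_{i-k,j-i-l}(t)=\binom{k+l}{k}\,\frac{\binom{j-k-l}{j-i-l}}{\binom{j}{i}}\,a_j a_{j-1}\cdots a_{j-k-l+1}\;B_t(a_j,\ldots,a_{j-k-l}).
$$

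The last point to check is the purely combinatorial identity
$$
\binom{j}{k+l}\binom{k+l}{k}\binom{j-k-l}{j-i-l}=\binom{j}{i}\binom{i}{k}\binom{j-i}{l},
$$
which I would verify by expanding both sides into factorials (using $(j-k-l)-(j-i-l)=i-k$): each side equals $\dfrac{j!}{k!\,l!\,(i-k)!\,(j-i-l)!}$. Dividing by $\binom{j}{i}\binom{j}{k+l}$ turns the bracketed factor above into $\binom{i}{k}\binom{j-i}{l}/\binom{j}{k+l}$, which is precisely (\ref{alphaiji}). There is no serious obstacle: all the analytic work --- solving the ODE system (\ref{dmnp}), the positivity of the coefficients $B_t(\cdots)$, and the algebraic Lemma \ref{lemA} --- has already been done in Theorem \ref{miji} and Lemma \ref{b}, so the only mild difficulty here is careful index bookkeeping in the substitution and the elementary binomial identity above.
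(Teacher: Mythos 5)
Your proof is correct and follows exactly the route the paper indicates ("straightforward using Proposition \ref{pth}, Theorem \ref{miji} and the lemma"): substitute the coefficients of Theorem \ref{miji} and the Beta quotient of Lemma \ref{b} into formula (\ref{alphaij}), cancel the products of $c_\cdot(\delta')$ and $c_\cdot(\delta)$, and reduce the remaining binomial factor via the identity $\binom{j}{k+l}\binom{k+l}{k}\binom{j-k-l}{j-i-l}=\binom{j}{i}\binom{i}{k}\binom{j-i}{l}$, which you verify correctly. The index bookkeeping in your application of Lemma \ref{b} (with its $(n,p)$ set to $(i,j-i)$ and its $(i,j)$ set to $(i-k,j-i-l)$) is accurate.
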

The proof is straightforward using Proposition \ref{pth}, Theorem \ref{miji} and the lemma. 

Let us now make some remarks concerning the above result. First, note that the mixture coefficients are symetric
  with respect to $\delta'$ and $\delta$. The non symetric part
  appears in the distributions $\nu_{i-k,j-i-l}$. Another point is that, looking at $\nu_{i,j-i}P_t$, we see that very few mixture coefficients will be significantly non nul. Indeed, they are all composed of sums of rapidly decaying exponentials.  

 To illustrate our result, let us compute more precisely some terms, {\em e.g.} $\nu_{1,0}, \nu_{2,0}$. For $n=1$, $\alpha_{1,0}^{1,0}(t)= \exp{(-a_1 t)}$,  $\alpha_{0,0}^{1,0}(t)=a_1 B_t(a_1,a_0)= 1-\exp{(-a_1 t)}$ and $a_1=\delta'+\delta, a_0=0$. Hence:
\begin{equation}\nonumber
\nu_{1,0}= \exp{(-(\delta'+\delta) t)} \nu_{1,0} + (1-\exp{(-(\delta'+\delta) t)}) \nu_{0,0}.
\end{equation}
For $n=2$, 
\begin{eqnarray*}
\alpha_{2,0}^{2,0}(t)&=& \exp{(-a_2 t)},\\
 \alpha_{1,0}^{2,0}(t)&=& a_2 B_t(a_2,a_1)= \frac{a_2}{a_2-a_1}(\exp{(-a_1 t)}-\exp{(-a_2 t)}),\\ \alpha_{0,0}^{2,0}(t)&=& a_2 a_1 B_t(a_2,a_1,a_0)=\frac{a_1}{a_2-a_1}\exp{(-a_2 t)}-\frac{a_2}{a_2-a_1}\exp{(-a_1 t)}+1,
\end{eqnarray*}
with
 $a_2=2(2+\delta'+\delta)$, $a_1=\delta'+\delta, a_0=0$. And so on $\ldots.$

 Note also that Proposition \ref{mixiji} and result
  (\ref{final2}) can be explained by spectral properties of the
  transition operator $P_t$. Indeed, considered as an operator on the
  space $L^{2}(\pi(x)dx)$, it has a sequence of eigenvalues and an
  orthonormal basis of eigenfunctions. The eigenvalues are exactly the
  $(\exp{(-a_{n} t)}, n \ge 0)$. The eigenfunction associated with
  $\exp{(-a_n t)}$ is a polynomial of degree $n$, linked with the $n$-th Jacobi polynomial  with indexes
  $(\frac{\delta'}{2}-1,\frac{\delta}{2}-1)$ (see the Appendix). Thus, each polynomial
  $h_{i,j}$ has a finite  expansion on this eigenfunctions
  basis. Therefore, $P_t h_{i,j}$ has also a finite expansion on the
  same basis. However, from these spectral properties, it is not
  evident to guess the expansion obtained in (\ref{final2}) nor is it
  to guess that the expansion contains only positive terms that lead
  to mixture coefficients.

Finally, Proposition \ref{mixiji} shows that, for all $t \ge 0$, $\sum_{0\le k \le i,0 \le l \le j-i}\alpha^{i,j-i}_{i-k,j-i-l}(t) =1.$ This can be checked directly by formulae (\ref{alphaiji})-(\ref{alpha}) (see the Appendix).

\subsection{Working the filtering-prediction algorithm and  estimating
  unknown parameters.}
We must now illustrate how Proposition \ref{suffi} allows to obtain
explicitly the successive distributions of filtering $\nu_{n|n:1}$ and
of (one-step) prediction $\nu_{n+1|n:1}$ (see (\ref{cond})). Suppose
that the initial distribution is ${\cal L}(X_1)= \nu_{0,0}$, {\em i.e.} the stationary distribution of $(x(t))$. After one observation $Y_1$, we have the up-dated distribution $\nu_{1|1:1}=\varphi_{Y_1}(\nu_{0,0})= \nu_{Y_1, 1-Y_1}$. Then, we apply the prediction step to get $\nu_{2|1:1}= \nu_{Y_1, 1-Y_1} P_{\Delta}$. This distribution is obtained by Proposition \ref{mixiji}:
\begin{equation}
\nu_{Y_1, 1-Y_1} P_{\Delta}= \sum_{0 \le k \le Y_1, 0 \le l \le 1-Y_1} \alpha^{Y_1, 1-Y_1}_{Y_1 - k,1-Y_1 -l}(\Delta) \nu_{Y_1 -k,1-Y_1 -l}.
\end{equation}
Then, there is another up-dating for $Y_2$, and another prediction, and so on. 
To be more precise, let us state a proposition that explains the use of Proposition \ref{suffi}.
\begin{prop} \label{algo} Suppose $\nu= \sum_{0 \le k \le i, 0 \le l \le j} \alpha_{i-k,j-l}\; \nu_{i-k,j-l}$ is a distribution of ${\bar {\cal F}}_f$.
\begin{enumerate}
\item  Then, for $y=0,1$, 
\begin{equation}\nonumber
\varphi_y(\nu) \propto \sum_{0 \le k \le i, 0 \le l \le j}
\alpha_{i-k,j-l} \; p_{\nu_{i-k,j-l}}(y)\; \nu_{i+y-k,j+1-y-l},
\end{equation}
where the marginal distribution $p_{\nu_{i-k,j-l}}(y)$ is given in (\ref{margij}). Thus, 
\begin{equation}\nonumber
\varphi_y(\nu) =\sum_{0 \le k \le i+y, 0 \le l \le j+1-y} {\hat \alpha_{i+y-k,j+1-y-l}} \;\nu_{i+y-k,j+1-y-l},
\end{equation}
where ${\hat \alpha_{i+y-k,j+1-y-l}}\propto \alpha_{i-k,j-l}\; p_{\nu_{i-k,j-l}}(y)$ for $k=0, 1,\ldots,i,l=0, 1, \ldots,j$ and ${\hat \alpha_{i+y-k,j+1-y-l}}=0$ otherwise.
\item We have:
\begin{equation}\nonumber
\nu P_{\Delta}=\sum_{0 \le \kappa \le i, 0 \le \lambda \le j}\left(\sum_{0 \le k \le \kappa,0 \le l \le \lambda} \alpha_{i-k,j-l}\;\alpha^{i-k,j-l}_{i-\kappa,j-\lambda}(\Delta) \right) \nu_{i-\kappa,j-\lambda},
\end{equation}
where the $\alpha^{i-k,j-l}_{i-\kappa,j-\lambda}(\Delta)$ are given in
Proposition \ref{mixiji}. 
\item The marginal distribution associated with $\nu$ is 
\begin{equation}\label{margmel}
p_{\nu}(y)= \sum_{0 \le k \le i, 0 \le l \le j} \alpha_{i-k,j-l}\; p_{\nu_{i-k,j-l}}(y).
\end{equation}
It is therefore a mixture of Bernoulli distribution (see Proposition \ref{c1} and formula (\ref{margij})).
\end{enumerate}
\end{prop}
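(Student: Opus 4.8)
The plan is to derive all three assertions from two facts: the linearity of the up-dating operator $\varphi_y$ (through its defining formula (\ref{phi})) and of the transition operator $\nu\mapsto\nu P_\Delta$ on the linear span of $\cal F$, and the single-component formulas already available, namely Proposition \ref{c1} for $\varphi_y(\nu_{i,j})$ and its marginal, and Proposition \ref{mixiji} for $\nu_{i,j-i}P_\Delta$. The only real work is the bookkeeping needed to re-collect the resulting finite sums as mixtures indexed the way the statement demands.

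I would start with item~3, since it is needed to normalize item~1. Writing $\nu=\sum_{0\le k\le i,\,0\le l\le j}\alpha_{i-k,j-l}\,\nu_{i-k,j-l}$ and using $p_\nu(y)=\int_{(0,1)}\nu(d\xi)f_\xi(y)$, one interchanges the finite sum with the integral to get $p_\nu(y)=\sum_{0\le k\le i,\,0\le l\le j}\alpha_{i-k,j-l}\,p_{\nu_{i-k,j-l}}(y)$, which is (\ref{margmel}); since each $p_{\nu_{i-k,j-l}}(y)$ is the Bernoulli mass (\ref{margij}), $p_\nu(y)$ is a convex combination of Bernoulli laws. For item~1, I use (\ref{phi}) to write $f_x(y)\,\nu(dx)=\sum_{0\le k\le i,\,0\le l\le j}\alpha_{i-k,j-l}\,f_x(y)\,\nu_{i-k,j-l}(dx)$, and observe that by Proposition \ref{c1} (applying (\ref{phi}) to each $\nu_{i-k,j-l}$ and then part~1 of that proposition) one has $f_x(y)\,\nu_{i-k,j-l}(dx)=p_{\nu_{i-k,j-l}}(y)\,\nu_{i+y-k,\,j+1-y-l}(dx)$. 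Dividing by $p_\nu(y)$, which by item~3 is exactly $\sum\alpha_{i-k,j-l}\,p_{\nu_{i-k,j-l}}(y)$, presents $\varphi_y(\nu)$ as the announced normalized mixture. The reindexing point is that the produced components sit at $(i+y-k,\,j+1-y-l)$ for $0\le k\le i$, $0\le l\le j$, so if one writes the mixture over the wider range $0\le k\le i+y$, $0\le l\le j+1-y$, the extra coefficients $\hat\alpha$ are forced to be $0$, precisely as stated.

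For item~2, I would use linearity of the transition operator to get $\nu P_\Delta=\sum_{0\le k\le i,\,0\le l\le j}\alpha_{i-k,j-l}\,\nu_{i-k,j-l}P_\Delta$, then substitute Proposition \ref{mixiji} applied to each $\nu_{i-k,j-l}$, which contributes the components $\nu_{i-k-k',\,j-l-l'}$ with coefficients $\alpha^{i-k,j-l}_{i-k-k',\,j-l-l'}(\Delta)$ over $0\le k'\le i-k$, $0\le l'\le j-l$. Setting $\kappa=k+k'$ and $\lambda=l+l'$ and collecting the coefficient of $\nu_{i-\kappa,\,j-\lambda}$ produces the double sum $\sum_{0\le k\le\kappa,\,0\le l\le\lambda}\alpha_{i-k,j-l}\,\alpha^{i-k,j-l}_{i-\kappa,\,j-\lambda}(\Delta)$, with $\kappa\in\{0,\dots,i\}$ and $\lambda\in\{0,\dots,j\}$, as claimed. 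Nonnegativity and the fact that the new coefficients sum to one then follow from Proposition \ref{suffi}, or directly from $\alpha\in S_f$, from each $\alpha^{i-k,j-l}_{\cdot}(\Delta)\in S_f$ by Proposition \ref{mixiji}, and from $p_{\nu_{i-k,j-l}}(y)\ge0$. There is no genuine obstacle here: the whole argument is linearity plus manipulation of finite sums. The one thing to be careful about is the combinatorial reindexing in items~1 and~2 — correctly identifying which terms produce a given component $\nu_{a,b}$ after the index shift, so that its coefficient is gathered from all of them — and this is exactly why the ranges of $(k,l)$ in the statement are the post-shift ones, with some of the coefficients vanishing.
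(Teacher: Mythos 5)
Your proposal is correct and follows exactly the route the paper intends: the paper dispatches part~1 as ``straightforward'' (linearity of $\varphi_y$ via (\ref{phi}) together with Proposition \ref{c1}) and part~2 as ``an application of Proposition \ref{mixiji} with an interchange of sums,'' which is precisely your reindexing $\kappa=k+k'$, $\lambda=l+l'$. You have simply written out the bookkeeping the paper leaves implicit; there is nothing to add or correct.
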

The first part is straightforward. The second part is an application
of Proposition \ref{mixiji} with an interchange of sums. Thus, the
number of components in the successive mixture distributions
grows. Indeed, let us compute the number of mixture components for the
filtering distributions. For $\nu_{1|1:1}$, we find
$(1+Y_1)(1+1-Y_1)$; the prediction step preserves the number of
components. For $\nu_{n|n:1}$ and $\nu_{n+1|n:1}$, the number of
components is $(1+\sum_{i=1}^{n}Y_i)(1+n-\sum_{i=1}^{n}Y_i)$. However, as noted above, very few mixture coefficients will be significantly non nul. It was also the case for the model investigated in Genon-Catalot and Kessler (2004). 

Let us notice that the $h$-step ahead predictive distribution, $\nu_{n+h|n:1}$ is obtained from $\nu_{n|n:1}$ by applying the operator $P_{\Delta}^{h}$, {\em i.e.} $\nu_{n+h|n:1}=\nu_{n|n:1}P_{\Delta}^{h}$. Therefore, this distribution stays in the class ${\bar {\cal F}}_{f}$ and has the same number of mixture components as $\nu_{n|n:1}$.

Now, suppose that $\delta', \delta$ are unknown and that we wish to
estimate these parameters using the data set $(Y_1, \ldots, Y_n)$. The
classical statistical approach is to compute the corresponding
maximum likelihood estimators. This requires the computation of the
exact joint density of this data set which gives the likelihood
function (see (\ref{likelihood})). For general hidden Markov
models, the exact formula of this density is difficult to handle since the integrals
giving the  conditional
densities of $Y_i$ given $(Y_{i-1}, \ldots, Y_1)$  are not explicitly
computable (see formula (\ref{marginalen})). On the contrary, in our
model, these integrals are computable by formula
(\ref{margij}). Suppose that the initial distribution is the
stationary distribution of (\ref{wf}), {\em i.e.} $\nu_{0,0}$. For
$i=1$, the law of $Y_1$ has density $p_{\nu_{0,0}}(y_1)$ given by
(\ref{margij}): It is a Bernoulli distribution with parameter
$\frac{\delta'}{\delta'+\delta}$. Then, for $i \ge 2$, the conditional
distribution of $Y_i$ given $Y_{i-1}, \ldots, Y_1$ has density
$p_{\nu_{i|i-1:1}}(y_i)$. It is now a mixture of Bernoulli
distributions. The exact likelihood is therefore a product of mixtures
of Bernoulli distributions. 
\subsection{Marginal smoothing.}\label{margsmooth}
In this section, we compute $\nu_{l|n:1}$ for $l<n$. To simplify notations, denote by $p(x_l|y_n, \ldots, y_1)$ the
conditional density of $X_l$ given $Y_n=y_n, \ldots, Y_1=y_1$, {\em
  i.e.} the density of $\nu_{l|n:1}$ taken at $Y_n=y_n, \ldots,
Y_1=y_1$. Analogously, denote by $p(y_i|y_{i-1}, \ldots, y_1)$ the
conditional density of $Y_i$ given $Y_{i-1}=y_{i-1}, \ldots,
Y_1=y_1$. We  introduce the backward function:
\begin{equation} \label{vraiscond}
p_{l,n}(y_{l+1}, \ldots,y_n;x), 
\end{equation}
equal to the conditional density of $(Y_{l+1}, \ldots, Y_n)$ given $X_l=x$. By
convention, we set $ p_{n,n}(\emptyset;x)=1$. Then, the following
forward-backward decomposition holds.
\begin{prop} For $l\le n$,
\begin{equation} \label{smooth}
p(x_l|y_n, \ldots, y_1)= \frac{p(x_l|y_l, \ldots,
  y_1)}{\prod_{i=l+1}^{n} p(y_i|y_{i-1}, \ldots, y_1)} p_{l,n}(y_{l+1}, \ldots,y_n;x_l)
\end{equation}
\end{prop}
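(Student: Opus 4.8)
The plan is to derive (\ref{smooth}) from the Markov structure of the pair $(X_n,Y_n)$ by a conditional-independence argument, exactly as in the classical hidden Markov model literature. First I would fix $l\le n$ and write the target density by Bayes' formula: since $p(x_l\mid y_n,\ldots,y_1)\propto p(x_l,y_n,\ldots,y_1)/p(y_n,\ldots,y_1)$, and since $p(y_n,\ldots,y_1)=p(y_l,\ldots,y_1)\prod_{i=l+1}^{n}p(y_i\mid y_{i-1},\ldots,y_1)$ by the chain rule, it suffices to show
\begin{equation}\nonumber
p(x_l,y_n,\ldots,y_1)=p(x_l,y_l,\ldots,y_1)\,p_{l,n}(y_{l+1},\ldots,y_n;x_l).
\end{equation}
Dividing the left side by $p(y_l,\ldots,y_1)$ and recognizing $p(x_l,y_l,\ldots,y_1)/p(y_l,\ldots,y_1)=p(x_l\mid y_l,\ldots,y_1)$ then produces (\ref{smooth}).

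The key step is the factorization above, and it rests on the following conditional independence: given $X_l$, the ``future'' block $(Y_{l+1},\ldots,Y_n)$ is independent of the ``past'' block $(Y_1,\ldots,Y_l)$. I would justify this by recalling, as stated in the excerpt, that $(X_n,Y_n)$ is a Markov chain with transition kernel $p_\Delta(x_n,x_{n+1})f_{x_{n+1}}(y_{n+1})\,dx_{n+1}\mu(dy_{n+1})$: conditionally on $X_l=x$, the variables $(X_{l+1},\ldots,X_n,Y_{l+1},\ldots,Y_n)$ depend on the past only through $X_l$, while $(Y_1,\ldots,Y_l)$ is a function of $(X_1,\ldots,X_l)$ and the corresponding emission noise, which is conditionally independent of everything after time $l$ given $X_l$. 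Hence
\begin{equation}\nonumber
p(y_{l+1},\ldots,y_n\mid x_l,y_l,\ldots,y_1)=p(y_{l+1},\ldots,y_n\mid x_l)=p_{l,n}(y_{l+1},\ldots,y_n;x_l),
\end{equation}
the last equality being the definition (\ref{vraiscond}). Multiplying by $p(x_l,y_l,\ldots,y_1)$ gives the desired factorization of $p(x_l,y_n,\ldots,y_1)$, and the case $l=n$ is the trivial identity using the convention $p_{n,n}(\emptyset;x)=1$.

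The only genuine obstacle is making the Markov/conditional-independence argument fully rigorous with densities rather than with abstract conditional expectations — i.e., checking that the joint density $p(x_1,\ldots,x_n,y_1,\ldots,y_n)$ factors as $\nu_{1|0:1}(dx_1)f_{x_1}(y_1)\prod_{i=2}^{n}p_\Delta(x_{i-1},x_i)f_{x_i}(y_i)$ and then integrating out $x_1,\ldots,x_{l-1},x_{l+1},\ldots,x_n$ to split the product at index $l$. This is a routine but slightly tedious Fubini computation: integrating $x_{l+1},\ldots,x_n$ against the forward part of the product yields precisely $p_{l,n}(y_{l+1},\ldots,y_n;x_l)$ by definition, and integrating $x_1,\ldots,x_{l-1}$ against the remaining part yields $p(x_l,y_l,\ldots,y_1)$, so I would simply carry out that integration and invoke the chain-rule identity for $p(y_n,\ldots,y_1)$ to conclude.
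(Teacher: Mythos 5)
Your argument is correct and is precisely the classical forward--backward factorization that the paper itself does not prove but simply attributes to Capp\'e \emph{et al.} (2005): the joint density factors as $\nu_{1|0:1}(dx_1)f_{x_1}(y_1)\prod_{i=2}^{n}p_\Delta(x_{i-1},x_i)f_{x_i}(y_i)$, integrating out $x_{l+1},\ldots,x_n$ yields $p_{l,n}(y_{l+1},\ldots,y_n;x_l)$ and integrating out $x_1,\ldots,x_{l-1}$ yields $p(x_l,y_l,\ldots,y_1)$, after which Bayes' formula and the chain rule for $p(y_n,\ldots,y_1)$ give (\ref{smooth}). Nothing is missing; the conditional-independence step you flag as the only delicate point is exactly the routine Fubini computation you describe.
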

This result is classical and may be found {\em e.g.} in Capp\'e {\em
  et al.} (2005). Therefore, the smoothing density is obtained
using the filtering density that we have already computed. The denominator in
(\ref{smooth}) is also available. It remains to have a more explicit
expression for the backward function (\ref{vraiscond}). 
The following proposition gives a backward recursion from $l=n-1$ down
to $l=1$ for computing (\ref{vraiscond}).
\begin{prop} First, for all $n$,
\begin{equation} \label{n-1n}
p_{n-1,n}(y_{n};x)= P_{\Delta}[f_{.}(y_{n})](x).
\end{equation}
Then, for $l+1 < n $,
\begin{equation} \label{ln}
p_{l,n}(y_{l+1}, \ldots,y_n;x)= P_{\Delta}[f_{.}(y_{l+1})
p_{l+1,n}(y_{l+2}, \ldots,y_n;.)](x)
\end{equation}
\end{prop}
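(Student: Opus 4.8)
The plan is to obtain both identities directly from the hidden Markov structure, with no extra machinery: recall that $(X_n)$ is a Markov chain with transition density $p_\Delta(x,x')$ and operator $P_\Delta$, and that, conditionally on the whole chain $(X_n)$, the observations $Y_i$ are independent, the conditional density of $Y_i$ given $X_i=x$ being $f_x$ with respect to $\mu$. The argument is then a short manipulation of conditional densities, organized exactly as the backward recursion announced in the statement, and is a standard piece of hidden-Markov-model bookkeeping.

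First I would treat the base case $l=n-1$. Using that $X_n$ given $X_{n-1}=x$ has density $p_\Delta(x,\cdot)$ and that $Y_n$ given $X_n=x'$ has density $f_{x'}$ with respect to $\mu$, the conditional density of $Y_n$ given $X_{n-1}=x$ (with respect to $\mu$) equals $\int_0^1 p_\Delta(x,x')f_{x'}(y_n)\,dx'=P_\Delta[f_{.}(y_n)](x)$; with the convention $p_{n,n}(\emptyset;x)=1$, this is (\ref{n-1n}). For the recursive step I would fix $l$ with $l+1<n$ and condition on $X_{l+1}$ in addition to $X_l=x$, the crucial claim being that, given $(X_l,X_{l+1})=(x,x')$, the vector $(Y_{l+1},\ldots,Y_n)$ has conditional density $f_{x'}(y_{l+1})\,p_{l+1,n}(y_{l+2},\ldots,y_n;x')$ with respect to the product of $n-l$ copies of $\mu$. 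Granting this, one integrates this conditional density against $P(X_{l+1}\in dx'\mid X_l=x)=p_\Delta(x,x')\,dx'$ and recognizes the result as $P_\Delta$ applied to the product function $x'\mapsto f_{x'}(y_{l+1})\,p_{l+1,n}(y_{l+2},\ldots,y_n;x')$, which is precisely (\ref{ln}).

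The step I expect to be the main obstacle is the justification of that factorization of the conditional density. One has to argue that, conditionally on $X_{l+1}$, the observation $Y_{l+1}$ depends on the chain only through $X_{l+1}$ while $(Y_{l+2},\ldots,Y_n)$ is built from $(X_{l+2},\ldots,X_n)$ together with auxiliary independent randomness, so that conditioning on $X_{l+1}$ screens off the past state $X_l$ from $(X_{l+2},\ldots,X_n)$, hence from $(Y_{l+2},\ldots,Y_n)$; and, further, that the conditional law of $(Y_{l+2},\ldots,Y_n)$ given $X_{l+1}=x'$ is, by the very definition of the backward function together with time-homogeneity, exactly $p_{l+1,n}(y_{l+2},\ldots,y_n;x')$ times the product of $n-l-1$ copies of $\mu$. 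This amounts to the Markov property of $(X_n)$, equivalently of the joint chain $(X_n,Y_n)$ with transition $p_\Delta(x_n,x_{n+1})f_{x_{n+1}}(y_{n+1})\,dx_{n+1}\,\mu(dy_{n+1})$ recalled after (\ref{prediction}). Once this is in place, the remainder is routine: all the functions $x\mapsto p_{l,n}(y_{l+1},\ldots,y_n;x)$ are bounded (by $1$ in the Bernoulli case (\ref{fxy}), as a downward induction from the formulae themselves shows), so the integrals and the action of $P_\Delta$ on them are well defined and Fubini may be invoked to rearrange.
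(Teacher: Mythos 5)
Your proposal is correct and follows essentially the same route as the paper: both arguments rest on the Markov structure of the joint chain $(X_n,Y_n)$ with transition $p_{\Delta}(x_n,x_{n+1})f_{x_{n+1}}(y_{n+1})\,dx_{n+1}\,\mu(dy_{n+1})$, the paper writing the conditional density of $(Y_{l+1},\ldots,Y_n)$ given $X_l=x$ as an explicit multiple integral and regrouping the inner integrations as $p_{l+1,n}(\cdot\,;x_{l+1})$, which is exactly the factorization you obtain by conditioning on $X_{l+1}$. The only difference is presentational, not mathematical.
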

\noindent
\begin{proof} We use the fact that $(X_n,Y_n)$ is Markov with
transition $p_{\Delta}(x_n,x_{n+1}) f_{x_{n+1}}(y_{n+1})$. Given
$X_{n-1}=x$, $X_{n}$ has distribution
$p_{\Delta}(x,x_{n})dx_{n}$. Hence, 
\begin{equation}\nonumber
p_{n-1,n}(y_{n};x)= \int_0^{1} p_{\Delta}(x,x_{n}) f_{x_{n}}(y_{n})dx_{n},
\end{equation}
which gives (\ref{n-1n}). Then, for $n \ge l+2$, 
\begin{equation} \nonumber
p_{l,n}(y_{l+1}, \ldots,y_n;x)
\end{equation}
\begin{equation} \nonumber
= \int_0^{1}p_{\Delta}(x,x_{l+1})
f_{x_{l+1}}(y_{l+1}) \times \prod_{i=l+2}^{n}
p_{\Delta}(x_{i-1},x_{i}) f_{x_{i}}(y_{i}) dx_{l+1}\ldots dx_{n}
\end{equation}
\begin{equation} \nonumber
= \int_0^{1}p_{\Delta}(x,x_{l+1})
f_{x_{l+1}}(y_{l+1}) p_{l+1,n}(y_{l+2}, \ldots,y_n;x_{l+1})dx_{l+1},
\end{equation}
which gives (\ref{ln}).
\end{proof}
Let us now apply these formulae to our model.  We will show briefly that backward functions can be computed by simple application of Theorem \ref{miji}. Indeed,  since 
\begin{equation}\nonumber
f_x(y_n)= h_{y_n,1-y_n}(x),
\end{equation}
\begin{equation}\nonumber
p_{n-1,n}(y_n;x)= P_{\Delta}h_{y_n,1-y_n}(x)= m_{y_n,1-y_n}(\Delta,x),
\end{equation}
is obtained by Theorem \ref{miji}. Next, we compute
\begin{equation}\nonumber
m_{y_n,1-y_n}(\Delta,.) \times h_{y_{n-1},1-y_{n-1}}(.),
\end{equation}
which is a linear combination of $h_{y_{n-1}+y_n-k,2-y_{n-1}-y_n-l}$ with $0 \le k \le y_n, 0 \le l \le 1-y_n$ and apply the transition operator $P_{\Delta}$ to  get $p_{n-2,n}(y_{n-1},y_n;x)$. This is again given by Theorem \ref{miji}. By elementary induction, we see that backward functions are explicit.

\noindent
%{\bf Acknowledgments.} 

\section{Appendix}\label{appendix}

\subsection{Proof of Lemma \ref{lemA}.}
Let us write in more details expression  (\ref{formule4}). We have
\begin{eqnarray*}
A&=&\frac{1}{(a_n-a_{n-1})}\times
\frac{1}{(a_{n-1}-a_{n-2})(a_{n-1}-a_{n-3})\ldots(a_{n-1}-a_{n-1-k})}\\
&&+\frac{(-1)}{(a_n-a_{n-2})(a_{n-1}-a_{n-2})}\times
\frac{1}{(a_{n-2}-a_{n-3})\ldots\ldots(a_{n-2}-a_{n-1-k})}+\ldots\\
&&+
\frac{(-1)^{j-1}}{(a_n-a_{n-j})(a_{n-1}-a_{n-j})\ldots(a_{n-j+1}-a_{n-j})}\times
\frac{1}{(a_{n-j}-a_{n-j-1}) \ldots(a_{n-j}-a_{n-k-1})}\\
&& +\ldots + \frac{(-1)^{k}}{(a_n-a_{n-1-k})(a_{n-1}-a_{n-1-k})(a_{n-2}-a_{n-1-k})\ldots(a_{n-k}-a_{n-1-k})}
 \end{eqnarray*}
Now, we set
\begin{eqnarray*}
L_0&=&(a_n - a_{n-1})(a_n - a_{n-2})(a_n-a_{n-3})\ldots (a_{n}-a_{n-k-1}),\\
L_1&=&\hspace*{.4cm} (a_{n-1}-a_{n-2})(a_{n-1}-a_{n-3})\ldots\ldots (a_{n-1}-a_{n-k-1}),\\
L_2&=&\hspace*{.8cm}(a_{n-2}-a_{n-3})(a_{n-2}-a_{n-4}) \ldots
(a_{n-2}-a_{n-k-1}),\\\vdots&&\vdots\\
L_{k-1}&=&\hspace*{3.2cm}(a_{n-k+1}-a_{n-k})(a_{n-k+1}-a_{n-k-1}),\\ 
L_k&=&\hspace*{6cm}(a_{n-k}-a_{n-k-1}).
\end{eqnarray*}
We must prove that 
\begin{equation}\label{AL0}
A=\frac{1}{L_0}.
\end{equation}
 For this, we introduce the
product $T_n=L_0 L_1\ldots L_k$.  Now, we need to prove that $A T_n=
L_1 L_2\ldots L_k$. We start to compute $A T_n$:
\begin{eqnarray*}
A T_n &=& \frac{L_0}{a_n-a_{n-1}} L_2 \ldots L_k \\
&&+ \frac{(-1) L_0 L_1}{(a_n-a_{n-2})((a_{n-1}-a_{n-2})}L_3 \ldots
L_k+\ldots\\
&&+\frac{(-1)^{j-1}L_0L_1\ldots
  L_{j-1}}{(a_n-a_{n-j})(a_{n-1}-a_{n-j})\ldots
  (a_{n-j+1}-a_{n-j})}L_{j+1}\ldots L_k +\ldots\\
&&+\frac{(-1)^{k}L_0 L_1\ldots L_{k-1}}{(a_n-a_{n-k-1})(a_{n-1}-a_{n-k-1})\ldots(a_{n-k+1}-a_{n-k-1})}.
\end{eqnarray*}
Now, we see that
\begin{equation}\label{p}
A T_n= P(a_n)
\end{equation}
 where $P(.)$ is a polynomial with degree $k$. Indeed, in $A T_n$, the
 terms containing $a_n$ come only from the terms
 \begin{equation} \nonumber
\frac{L_0}{(a_n-a_{n-j})}=P_j(a_n),
\end{equation}
where
\begin{equation}\nonumber
P_1(x)=(x-a_{n-2})(x-a_{n-3})\ldots(x-a_{n-k-1}),
\end{equation}
\begin{equation}\nonumber
P_j(x)= (x-a_{n-1})(x-a_{n-2})\ldots(x-a_{n-j+1})\times(x-a_{n-j-1})\ldots(x-a_{n-k-1}),
\end{equation}
\begin{equation}\nonumber
P_{k+1}(x)= (x-a_{n-1})(x-a_{n-2})\ldots(x-a_{n-k}),
\end{equation}
are all products of $k$ factors of degree $1$. Notice that $P_j(x)$ is
nul for $x=a_{n-1}, a_{n-2},  \ldots, a_{n-j+1},a_{n-j-1},\ldots,
a_{n-k-1}$ and that $P_1(a_{n-1})=L_1$, 
\begin{equation}\nonumber
P_j(a_{n-j})= (a_{n-j}-a_{n-1})(a_{n-j}-a_{n-2})\ldots(a_{n-j}-a_{n-j+1})\times L_j,
\end{equation}
\begin{equation}\nonumber
P_{k+1}(a_{n-k-1})= (a_{n-k-1}-a_{n-1})(a_{n-k-1}-a_{n-2})\ldots (a_{n-k-1}-a_{n-k}).
\end{equation}
Therefore (see (\ref{p}))
\begin{eqnarray*}
P(x)&=& P_1(x) L_2L_3\ldots L_k\\
&& -P_2(x) \frac{L_1}{a_{n-1}-a_{n-2}}L_3\ldots L_k +\ldots\\
&& + (-1)^{j-1}P_j(x) \frac{L_1 L_2\ldots L_{j-1}}{(a_{n-1}-a_{n-j})(a_{n-2}-a_{n-j})\ldots
  (a_{n-j+1}-a_{n-j})}L_{j+1}\ldots L_k +\ldots\\
&&+(-1)^{k} P_{k+1}(x) \frac{L_1\ldots
  L_{k-1}}{(a_{n-1}-a_{n-1-k})(a_{n-2} -a_{n-1-k})\ldots
  (a_{n-k+1}-a_{n-1-k})}
\end{eqnarray*}
Now,
\begin{eqnarray*}
P(a_{n-1})&=&P_1(a_{n-1})L_2L_3\ldots L_k= L_1L_2L_3\ldots L_k\\
P(a_{n-2})&=&-P_2(a_{n-2}) \frac{L_1}{(a_{n-1}-a_{n-2})}L_3\ldots L_k=
-\frac{(a_{n-2}-a_{n-1})}{(a_{n-1}-a_{n-2})}L_1L_2\ldots L_k\\
&=&L_1L_2\ldots L_k\\
\vdots &&\vdots \\
P(a_{n-j})&=& (-1]^{j-1} P_j(a_{n-j})\frac{L_1 L_2\ldots L_{j-1}}{(a_{n-1}-a_{n-j})(a_{n-2}-a_{n-j})\ldots
  (a_{n-j+1}-a_{n-j})}L_{j+1}\ldots L_k\\
&=&(-1)^{2(j-1)}L_1L_2\ldots L_{j-1} L_j L_{j+1}\ldots L_k \\
\vdots &&\vdots\\
P(a_{n-k-1})&=& (-1)^{k} P_{k+1}(a_{n-k-1})) \frac{L_1\ldots
  L_{k-1}}{(a_{n-1}-a_{n-1-k})(a_{n-2} -a_{n-1-k})\ldots
  (a_{n-k+1}-a_{n-1-k})}\\
&=&(-1)^{2k} L_1 L_2\ldots L_k.
\end{eqnarray*}
Therefore, $P(x)=L_1L_2\ldots L_k$ for the $k+1$ distinct values
$x=a_{n-1},a_{n-2}, \ldots,a_{n-k-1}$. Since $P(x)$ is a polynomial of
degree $k$, it is constant equal to $L_1L_2\ldots L_k$. In particular,
$$
P(a_n)=L_1L_2 \ldots L_k,
$$
which is equivalent to $A= 1/L_0$ (see (\ref{AL0})). So the proof of
Lemma \ref{lemA} is complete.

\subsection{Binomial or negative binomial conditional distributions.}
Proposition \ref{c1} holds for the other cases given in the introduction. Let $\nu_{i,j}$ belong to ${\cal F}$.
\begin{itemize}
\item If $f_x(y)= \binom{N}{y}x^y (1-x)^{N-y}$, $y=0, \ldots, N$, then $\varphi_y(\nu_{i,j})= \nu_{i+y, j+N-y}$ and the marginal distribution is equal to:
\begin{equation}
p_{\nu_{i,j}}(y)=
\binom{N}{y}\frac{B(i+y+(\delta'/2),j+N-y+(\delta/2))}{B(i+(\delta'/2),j+(\delta/2))},
y=0,1, \ldots,N 
\end{equation}
\item  If $f_x(y)=\binom{m+y-1}{y} x^m (1-x)^y$, $y=0, \ldots$, then $\varphi_y(\nu_{i,j})= \nu_{i+m, j+y}$ and for $y=0,1, \ldots,$
\begin{equation}
p_{\nu_{i,j}}(y)=
\binom{m+y-1}{y}\frac{B(i+m+(\delta'/2),j+y+(\delta/2))}{B(i+(\delta'/2),j+(\delta/2))}\\
\end{equation}
\end{itemize}
\subsection{Mixture coefficients.}
We now check using formula (\ref{alphaiji}) that $\sum_{0\le k \le i, 0 \le l \le j-i}\alpha^{i,j-i}_{i-k,j-i-l}(t)=1.$ By  interchanging sums and setting $k'=k, l'=k+l$, we first get
\begin{equation}\nonumber
\sum_{0\le k \le i, 0 \le l \le j-i}\alpha^{i,j-i}_{i-k,j-i-l}(t)= \sum_{l'=0}^{j}p(i,j-i) a_ja_{j-1}\ldots a_{j-l'+1}B_t(a_j,\ldots, a_{j-l'}),
\end{equation}
where
\begin{equation}\nonumber
p(i,j-i)=\sum_{0\le k'\le i, 0 \le l'-k'\le j-i} \frac{ \binom{i}{k'} \binom{j-i}{l'-k'}}{\binom{j}{l'}}.
\end{equation}
We recognize the sum of hypergeometric probabilities so that $p(i,j-i)=1$. There remains to prove that, for all $i\ge 0$,
\begin{equation}\nonumber
\sum_{0\le k \le i}\alpha^{i,0}_{i-k,0}(t)= \sum_{k=0}^{i} a_i a_{i-1}\ldots a_{i-k+1}B_t(a_i,\ldots, a_{i-k})=1.
\end{equation}
We fix $i$. Looking at (\ref{btank}) and  interchanging sums, we have to check
that
\begin{equation}\nonumber
\sum_{j=0}^{i} H_{i-j} \exp{(-a_{i-j} t)}=1,
\end{equation}
where, for $j=0,1, \ldots,i$, 
\begin{equation}\label{Hij}
H_{i-j}=  \sum_{k=j}^{i} L_k^{j}
\end{equation}
and 
\begin{equation}\nonumber
L_k^{j}=(-1)^{k+j} \frac{ a_i a_{i-1}\ldots
  a_{i-k+1}}{\prod_{0\le l \le k, l\neq j}|a_{i-j} - a_{i-l}|}.
\end{equation}
Since $a_0 =0$ and $H_0= (-1)^{2j} a_i \ldots a_1/ a_i \ldots a_1= 1$,
we have $H_0 \exp{(-a_0 t)}= 1$. So we must prove that, for all $j=0,
1, \ldots, i-1$, $H_{i-j}=0$. Denote by $D_k^{j}$ the denominator of
$L_k^{j}$:
$$
D_k^{j}=(a_i - a_{i-j}) \ldots (a_{i-j+1} - a_{i-j})(a_{i-j}-
a_{i-j-1})\ldots (a_{i-j}-a_{i-k}).
$$
It is easy to prove by induction (on $k$) that, for $k=i-1, \ldots, j+1$,
\begin{equation}\nonumber
{L'}_k^{j}:=L_i^{j} + L_{i-1}^{j} +\ldots + L_k^{j}= (-1)^{k+j}
\frac{a_i\ldots {\hat a_{i-j}} \ldots a_{i-k+1}}{D_{k-1}^{j}}, 
\end{equation}
where the notation ${\hat .}$ means that the term is absent. The
formula for $k=j+1$ yields 
\begin{equation}\nonumber
{L'}_{j+1}^{j}= (-1)^{2j+1} \frac{a_i \ldots a_{i-j+1}}{D_j^{j}}= - L_j^{j}.
\end{equation}
This gives $H_{i-j}=0$ (see (\ref{Hij})) for all $j=0,1, \ldots, i-1$..
\subsection{Spectral approach.}
The transition density $p_t(x,y)$ of (\ref{wf}) can be expressed using
the spectral decomposition of the operator $P_t$. 
Consider equation (\ref{wf}) and set $z(t)= 2 x(t)-1$. Then, 
\begin{equation}\nonumber
dz(t)= [-\delta (1+z(t))+ \delta'(1-z(t)]dt + (1-z^{2}(t))^{1/2} dW_t.
\end{equation}
This is a Jacobi diffusion process. Let us set 
\begin{equation} \label{alphabeta}
\alpha= \frac{\delta}{2}-1, \beta= \frac{\delta'}{2}-1.
\end{equation}
Then, for $n \ge 0$, $u(z)= P_n^{\alpha, \beta}(z)$  with 
\begin{equation}\label{jacobi}
P_n^{\alpha, \beta}(z)= \frac{(-1)^{n}}{2^{n} n!}
(1-z)^{-\alpha}(1+z)^{-\beta} \frac{d^{n}}{dz^{n}}[(1-z)^{n+\alpha} (1+z)^{n+\beta}],
\end{equation}
is solution of 
\begin{equation} \label{ode}
(1-z^{2})u'' + [\beta - \alpha -(\alpha+\beta +2)z]u'= -n(n+\alpha+\beta+1)u.
\end{equation}
The function (\ref{jacobi}) is the Jacobi polynomial of degree
$n$ with indexes $(\alpha,\beta)$. The sequence  $(P_n^{\alpha, \beta}(z), n \ge 0)$ is an 
orthogonal family with respect to the weight function
$\rho(z)=(1-z)^{\alpha}(1+z)^{\beta}1_{(-1,+1)}(z)$. After normalization, it
constitutes an orthonormal basis of $L^{2}(\rho(z)dz)$ (see {\em e.g.}
Lebedev (1972, p.96-97) or Nikiforov and Ouvarov (1983, p.37)). Now,
we set $h(x)=u(2x-1)$ in (\ref{ode}) and get:
\begin{equation}\nonumber
2x(1-x)h'' +[\beta - \alpha -(\alpha+\beta +2)(2x-1)]h' = - 2 n(n+\alpha+\beta+1)h.
\end{equation}
Using the relations (\ref{alphabeta}), we obtain:
\begin{equation}\nonumber
2x(1-x)h'' +[-\delta x + \delta'(1-x)]h' = -
n(2(n-1)+\delta+\delta')h.
\end{equation}
Hence, $Lh== -a_n h$ where $L$ is the infinitesimal generator of (\ref{wf}).  For $n \ge 0$, the sequence
\begin{equation}\nonumber
Q_n(x)= P_n^{ \frac{\delta}{2}-1, \frac{\delta'}{2}-1}(2x-1)
\end{equation}
is the sequence of eigenfunctions of  $L$. The eigenvalue associated with $Q_n$ is $-a_n$. The transition operator $P_t$ has the same sequence of eigenfunctions,
and the eigenvalues are $(\exp{(-a_nt)})$. We have:
\begin{equation}\nonumber
Q_n(x)= \frac{(-1)^{n}}{ n!}
x^{-( \frac{\delta'}{2}-1)}(1-x)^{-(\frac{\delta}{2}-1)} \frac{d^{n}}{dx^{n}}[(1-x)^{n+ \frac{\delta'}{2}-1} x^{n+\frac{\delta}{2}-1}].
\end{equation}
Each polynomial $Q_n$ is of the form (see (\ref{hij}))
\begin{equation}\label{qn}
Q_n(x)= \sum _{i=0}^{n} c_{i,n-i}^{n} h_{i,n-i}(x).
\end{equation}
And each $h_{i,j-i}$ can be developped as
\begin{equation}\nonumber
h_{i,j-i}= \sum_{k=0}^{j} d_{k}^{i,j-i} Q_k,
\end{equation}
with $d_k^{i,j-i}=c_k^{-1/2} \int_0^{1} h_{i,j-i}(x)Q_k(x)\pi(x)dx $
and $c_k= \int_0^{1}Q_k^{2}(x)\pi(x)dx$. Since $P_t Q_k= \exp{(-a_k t)} Q_k$, 
\begin{equation}\nonumber
P_t h_{i,j-i}= \sum_{k=0}^{j} \exp{(-a_k t)}d_{k}^{i,j-i} Q_k.
\end{equation}
This approach requires the computation of the coordinates $d_k^{i,j-i}$ and of the coefficients $c_{i,k-i}^{k}$ of (\ref{qn}). Our method gives directly the expression of $P_t h_{i,j-i}$.

Let us notice that the transition density of (\ref{wf}) has the following expression:
\begin{equation}\label{transition}
p_t (x,y)= \pi(y) \sum_{n=0}^{+\infty} \exp{(-a_n t)} Q_n(x)Q_n(y) c_n^{-1},
\end{equation}
as explained in Karlin and Taylor (1981).  Therefore, by using the
expression (\ref{qn}) and some computations, it is possible to prove
that this transition satisfies also condition (T1) of Chaleyat-Maurel
and Genon-Catalot (2006). More precisely, this transition can be
expressed as an infinite mixture of distributions of the class ${\cal
  F}$.  

This property has the following consequence. Suppose that the initial
variable in (\ref{wf}) is deterministic $x(0)=x_0$. Then, $x(t_1)$ has
distribution $p_{t_1}(x_0, x)$. This distribution belongs to the
extended class ${\bar {\cal F}}$ composed of infinite mixtures of
distributions of ${\cal F}$. We can apply our results to the extended
class: The filtering, prediction or smoothing distributions all belong
to ${\bar {\cal F}}$. 
\end{document}